\newtheorem{theorem}[equation]{Theorem}
\newtheorem{lemma}[equation]{Lemma}
\newtheorem{proposition}[equation]{Proposition}
\newtheorem{corollary}[equation]{Corollary}
\theoremstyle{definition}
\newtheorem{definition}[equation]{Definition}
\theoremstyle{remark}
\newtheorem{remark}[equation]{Remark}
\numberwithin{equation}{section}
\newcommand{\osf}{{\normalfont \textsf{X}}}
\newcommand{\lang}{\CL_{\osf}}
\newcommand{\ualgshift}{\TCA_R(\osf)}
\newcommand{\udalgshift}{\TCD_R(\osf)}
\newcommand{\alf}{\mathscr{A}}
\newcommand{\N}{\mathbb{N}}
\newcommand{\F}{\mathbb{F}}
\newcommand{\PP}{\mathbb{P}}
\newcommand{\CA}{\mathcal{A}}
\newcommand{\CD}{\mathcal{D}}
\newcommand{\CL}{\mathcal{L}}
\newcommand{\TCA}{\widetilde{\CA}}
\newcommand{\TCB}{\mathcal{U}}
\newcommand{\TCD}{\widetilde{\CD}}
\newcommand{\nn}{\mathbb{N}}
\newcommand{\eword}{\omega}
\title{Irreducible representations of one-sided subshift algebras}
\author[D. Gonçalves]{Daniel Gonçalves}
\address[Daniel Gonçalves and Danilo Royer]{Departamento de Matem\'atica, Universidade Federal de Santa Catarina, 88040-970 Florian\'opolis SC, Brazil. }
\email{daemig@gmail.com \\ daniloroyer@gmail.com}
\author[D. Royer]{Danilo Royer}
\begin{document}

\keywords{Subshift algebras, irreducible representations, left minimal ideals, tail equivalence.}
\subjclass[2020]{16S10, 16S88, 16G20, 16G30}

\thanks{The first author was partially supported by Capes-Print Brazil and CNPq - Conselho Nacional de Desenvolvimento Científico e Tecnológico - Brazil.}

\begin{abstract}
Given a subshift over an arbitrary alphabet, we construct a representation of the associated unital algebra. We describe a criteria for the faithfulness of this representation in terms of the existence of cycles with no exits. Subsequently, we focus on describing the irreducible components of this representation and characterize equivalence between such components. Building upon these findings, we identify the minimal left ideals of a subshift algebra associated with (what we call) line paths. 
\end{abstract}

\maketitle

\section{Introduction}

Motivated by deep connections with symbolic dynamics, algebras associated with a subshift over an arbitrary alphabet have recently been defined in \cite{BGGV}. These algebras generalize a large class of (ultragraph) Leavitt path algebras and conjugacy between subshifts is described via isomorphism of the unital algebras associated, see \cite{BGGV, BCGWLabel}. Driven by the extensively studied connections between the algebraic structure of the Leavitt path algebra associated with a graph, the combinatorial properties of the graph, and the dynamical properties of the associated subshift of finite type, our aim is to contribute to the understanding of the algebraic structure of the unital algebra associated with a subshift. Specifically, we focus on the description of irreducible representations and the identification of left minimal ideals of the unital subshift algebra.

Irreducible representations (or simple modules) of Leavitt path algebras have been studied exhaustively in the literature, see \cite{Namirred, Aranga, Chen, FGGirred, MR2785948, GRirred,  reduction, rangaR, Lia} for example.
One of the pivotal developments in the theory was presented in \cite{Chen}, where Chen considered a module with a basis consisting of infinite paths tail equivalent to a given one-sided infinite path $q$ in a graph $E$, and proved that this module is $L_K(E)$-simple. 
Chen also noticed that this module may be obtained via branching systems (a concept developed in \cite{MR2785948}) and studied minimal left ideals. In this paper, we generalize the construction in \cite{Chen} to unital subshift algebras and use it to identify minimal left ideals.

Minimal left ideals of Leavitt path algebras have been described in \cite{Socle, Socle1} in terms of line points. Using this description, Chen showed that the irreducible modules constructed in \cite{Chen} are in correspondence with the left minimal ideals of a Leavitt path algebra. In the context of subshift algebras, a comprehensive description of all left minimal ideals is lacking. Thus, we take an alternative approach and describe minimal left ideals of the subshift algebra using the irreducible modules we construct. 
 In a follow-up paper, we plan to provide a comprehensive description of all left minimal ideals of a subshift algebra, to enhance our understanding of its socle (the sum of all its minimal left ideals), which is an important algebraic concept, (see \cite{Socle, Socle1, Socle3} for the socle theory of Leavitt path algebras). 

This paper is organized as follows. After this brief introduction, Section 2 revisits the fundamental concepts regarding the unital subshift algebra. 
In Section 3, given a subshift $\osf$, we construct a representation of the associated unital algebra $\ualgshift$, where $R$ is a ring, in the space of endomorphisms of $\PP$ (see Proposition~\ref{representationtheorem}). Here, $\PP$ denotes the free $R$-module generated by elements in $\osf$.
We characterize the faithfulness of this representation in terms of cycles with no exits, as stated in Theorem \ref{cyclenoexit}. Subsequently, we delve into the study of irreducible components (submodules) of the representation presented in Proposition~\ref{representationtheorem}. We introduce the concept of tail equivalence (Definitition~\ref{jacarenapraia}) and demonstrate that the submodule generated by a tail equivalence relation is irreducible (provided $R$ is a field), see Theorem \ref{invariant submodules}. We conclude the section by characterizing the set of all $\ualgshift$-homomorphisms of the submodule generated by a tail equivalence relation (Proposition \ref{endo}), as well as determining when two such submodules are equivalent (Proposition~\ref{tinto}). In the final section,  we focus on the study of minimal left ideals of the unital algebra associated with a subshift. We introduce the concept of a line path (Definition~ \ref{linepath}) and show that it induces a left ideal of $\ualgshift$ isomorphic to the submodule generated by the tail equivalence relation of the line path (Theorem~\ref{eletrico}). By combining the results from Sections 3 and 4, we establish that the left ideal induced by a line path is minimal (Corollary~\ref{madruga}).

\section{Preliminaries}\label{eleicoes}

Throughout the paper, $R$ stands for a commutative unital ring, $\nn=\{0,1,2,\ldots\}$ and $\nn^*=\{1,2,\ldots\}$.

We start the preliminaries with the basic concepts of symbolic dynamics that will be necessary for our work (we follow the notation of \cite{BGGV}).

\subsection{Symbolic Dynamics}\label{symbolic}

Let $\alf$ be a non-empty set, called an \emph{alphabet}, and let $\sigma$ be the \emph{shift map} on $\alf^\N$, that is, $\sigma$ is the map from $\alf^\N$ to $\alf^\N$ given by $\sigma(x)=y$, where $x=(x_n)_{n\in \N}$ and  $y=(x_{n+1})_{n\in \N}$. Elements of $\alf^*:=\bigcup_{k=0}^\infty \alf^k$ are called \emph{blocks} or \emph{words}, and $\omega$ stands for the empty word. We also set $\alf^+=\alf^*\setminus\{\eword\}$. Given $\alpha\in\alf^*\cup\alf^{\N}$, $|\alpha|$ denotes the length of $\alpha$ and for $1\leq i,j\leq |\alpha|$, we define $\alpha_{i,j}:=\alpha_i\cdots\alpha_j$ if $i\leq j$, and $\alpha_{i,j}=\eword$ if $i>j$. If moreover $\beta\in\alf^*$, then $\beta\alpha$ denotes the usual concatenation. For a block $\alpha\in \alf^k$, $\alpha^\infty$ denotes the infinite word $\alpha \alpha \ldots$. A subset $\osf \subseteq \alf^\N$ is \emph{invariant} for $\sigma$ if $\sigma (\osf)\subseteq \osf$. For an invariant subset $\osf \subseteq \alf^\N$, we define $\CL_n(\osf)$ as the set of all words of length $n$ that appear in some sequence of $\osf$, that is, $$\CL_n(\osf):=\{(a_0\ldots a_{n-1})\in \alf^n:\ \exists \ x\in \osf \text{ s.t. } (x_0\ldots x_{n-1})=(a_0\ldots a_{n-1})\}.$$ Clearly, $\CL_n(\alf^\N)=\alf^n$ and we always have that $\CL_0(\osf)=\{\omega\}$.
The \emph{language} of $\osf$ is the set $\lang$, which consists of all finite words that appear in some sequence of $\osf$, that is,
$$\lang:=\bigcup_{n=0}^\infty\CL_n(\osf).$$
For elements $c,d\in \lang$, we use the notation $c_{|c|}\neq d_{|d|}$ to mean that the last letter of $c$ is different from the last letter of $d$.

Given $F\subseteq \alf^*$, the \emph{subshift} $\osf_F\subseteq \alf^\N$ is the set of all sequences $x$ in $\alf^\N$ such that no word of $x$ belongs to $F$. When the context is clear, we denote $\osf_F$ by $\osf$. 
The key sets that are used in the definition of the algebra associated with a subshift are defined below. 

\begin{definition}\label{Arapaima}
Let  $\osf$ be a subshift for an alphabet $\alf$. Given $\alpha,\beta\in \lang$, define \[C(\alpha,\beta):=\{\beta x\in\osf:\alpha x\in\osf\}.\] In particular, the set  $C(\eword,\beta)$ is denoted by $Z_{\beta}$ and called a \emph{cylinder set}, and the set  $C(\alpha,\eword)$ is denoted by $F_{\alpha}$ and called a \emph{follower set}. Notice that $\osf=C(\eword,\eword)$. 
\end{definition}

\subsection{Unital algebras of subshifts}\label{unital}

In this section, we recall the definition of the unital algebra associated with a general subshift $\osf$, as done in \cite{BGGV}. We start with the definition of the Boolean algebra associated with the sets of the form $C(\alpha,\beta)$.

\begin{definition}\label{diachuvoso}
Let $\osf$ be a subshift. Define $\TCB$ to be the Boolean algebra of subsets of $\osf$ generated by all $C(\alpha,\beta)$ for $\alpha,\beta\in\lang$, that is, $\TCB$ is the collections of sets obtained from finite unions, finite intersections, and complements of the sets $C(\alpha,\beta)$.
\end{definition}

We can now recall the definition of the unital algebra associated with a subshift.

\begin{definition}\label{universal properties}
Let $\osf$ be a subshift. We define the \emph{unital subshift algebra} $\ualgshift$ as the universal unital $R$-algebra  with generators $\{p_A: A\in\TCB\}$ and $\{s_a,s_a^*: a\in\alf\}$, subject to the relations:
\begin{enumerate}[(i)]
    \item $p_{\osf}=1$, $p_{A\cap B}=p_Ap_B$, $p_{A\cup B}=p_A+p_B-p_{A\cap B}$ and $p_{\emptyset}=0$, for every $A,B\in\TCB$;
    \item $s_as_a^*s_a=s_a$ and $s_a^*s_as_a^*=s_a^*$ for all $a\in\alf$;
    \item $s_{\beta}s^*_{\alpha}s_{\alpha}s^*_{\beta}=p_{C(\alpha,\beta)}$ for all $\alpha,\beta\in\lang$, where $s_{\eword}:=1$ and, for $\alpha=\alpha_1\ldots\alpha_n\in\lang$, $s_\alpha:=s_{\alpha_1}\cdots s_{\alpha_n}$ and $s_\alpha^*:=s_{\alpha_n}^*\cdots s_{\alpha_1}^*$.
\end{enumerate}
\end{definition}

Recall that the relative range of $(A,\alpha)$, where $\alpha\in \lang$ and $A\in \TCB$, is given by \[r(A,\alpha)=\{x\in\osf: \alpha x\in A\}.\]

An important feature of $\ualgshift$ is its grading by the free group on the alphabet. This grading arises from the isomorphism of the algebra with a certain partial skew group ring, a result proved in \cite{BGGV} that we recall below.

\begin{theorem}\label{thm:set-theoretic-partial-action}
Let $\osf$ be a subshift. Then, $\ualgshift\cong\udalgshift\rtimes_{\tau}\F$ via an isomorphism $\Phi$ that sends $s_a$ to $1_a\delta_a$ and $s^*_a$ to $1_{a^{-1}}\delta_{a^{-1}}$.
\end{theorem}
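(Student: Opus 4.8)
The plan is to exhibit $\Phi$ together with an explicit inverse $\Psi$ and to verify that the two are mutually inverse on generators, rather than to argue bijectivity directly. For this I would first recall the concrete shape of the partial skew group ring $\udalgshift\rtimes_\tau\F=\bigoplus_{g\in\F}D_g\delta_g$, where each $D_g$ is an ideal of the commutative diagonal $\udalgshift$, the maps $\tau_g\colon D_{g^{-1}}\to D_g$ are the isomorphisms of the partial action $\tau$, and multiplication is governed by $(a\delta_g)(b\delta_h)=\tau_g(\tau_{g^{-1}}(a)\,b)\,\delta_{gh}$. In this notation $1_a$ and $1_{a^{-1}}$ are the identities of the ideals $D_a$ and $D_{a^{-1}}$, so that the prescribed images $1_a\delta_a$ and $1_{a^{-1}}\delta_{a^{-1}}$ of $s_a$ and $s_a^*$ are homogeneous of degrees $a$ and $a^{-1}$, and I would complete the assignment by sending $p_A$ to the degree-$e$ idempotent $p_A\delta_e\in\udalgshift$ attached to $A\in\TCB$.

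The first substantive step is to produce $\Phi$ from the universal property of $\ualgshift$, which amounts to checking that these images satisfy relations (i)--(iii) of Definition~\ref{universal properties}. Relation (i) is automatic, since $A\mapsto p_A\delta_e$ simply reproduces the Boolean-algebra structure already carried by the diagonal $\udalgshift$ in degree $e$. Relations (ii) and (iii) are finite computations in the skew product: expanding $(1_a\delta_a)(1_{a^{-1}}\delta_{a^{-1}})(1_a\delta_a)$ collapses to $1_a\delta_a$ by the idempotent identities for $\tau$, while expanding the alternating product corresponding to $s_\beta s_\alpha^* s_\alpha s_\beta^*$ lands in degree $e$ and, because the domains and the maps $\tau_g$ are set up to track the shift dynamics, telescopes precisely to the idempotent of $C(\alpha,\beta)$. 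This yields a well-defined $R$-algebra homomorphism $\Phi$, and surjectivity follows at once, as the elements $1_a\delta_a$, $1_{a^{-1}}\delta_{a^{-1}}$ and the image of the diagonal generate every summand $D_g\delta_g$ under the multiplication rule.

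For the inverse I would exhibit a covariant realization of $(\udalgshift,\F,\tau)$ inside $\ualgshift$. The diagonal embeds by an $R$-algebra map $\pi\colon\udalgshift\to\ualgshift$ sending $p_A\mapsto p_A$ (well defined because relation (i) already holds in $\ualgshift$), and the partial action is implemented by the elements $u_g$ built from the $s_\alpha,s_\alpha^*$; the covariance identities $u_g\,\pi(a)\,u_{g^{-1}}=\pi(\tau_g(a))$ for $a\in D_{g^{-1}}$ are exactly relation (iii). Defining $\Psi$ on the spanning set by $a\delta_g\mapsto\pi(a)\,u_g$ and verifying multiplicativity against the product formula above then gives an algebra homomorphism $\Psi\colon\udalgshift\rtimes_\tau\F\to\ualgshift$. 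One finally checks $\Psi\Phi$ and $\Phi\Psi$ on generators: for instance $\Psi\Phi(s_a)=\pi(1_a)\,u_a=s_as_a^*s_a=s_a$ by relation (ii), and the remaining checks are analogous, so $\Phi$ and $\Psi$ are mutually inverse.

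The main obstacle is the well-definedness and multiplicativity of $\Psi$, equivalently the assertion that the covariant realization $(\pi,u)$ genuinely reproduces the partial action inside $\ualgshift$; this is where the faithfulness of $\tau$ enters, since it is what guarantees that the diagonal does not collapse and that the two compositions are identities on all of the respective algebras rather than merely on generators. The cleanest way to organize this verification is to use that both algebras are $\F$-graded and that $\Phi$ and $\Psi$ preserve the grading, which reduces every identity to be checked to its homogeneous pieces and ultimately to the degree-$e$ diagonal $\udalgshift$, where the Boolean relations make the comparison transparent. Once the degree-$e$ component is handled, the isomorphisms $\tau_g$ transport the conclusion to every degree $g\in\F$, completing the proof along the lines of \cite{BGGV}.
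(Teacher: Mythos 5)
This theorem is not proved in the paper at all: it is recalled verbatim from \cite{BGGV}, so there is no in-paper proof to compare against. Measured against the actual argument in \cite{BGGV}, your outline follows essentially the same route: one defines a set-theoretic partial action of $\F$ on $\osf$ whose domains are exactly the sets $C(\alpha,\beta)$ (for $g=\beta\alpha^{-1}$ in reduced form, and empty for group elements not of this form), dualizes it to the partial action $\tau$ on $\udalgshift$, obtains $\Phi$ from the universal property of $\ualgshift$ by checking relations (i)--(iii) on $1_a\delta_a$, $1_{a^{-1}}\delta_{a^{-1}}$, $p_A\delta_e$, and inverts it via a covariant pair $(\pi,u)$ with $u_{\beta\alpha^{-1}}=s_\beta s_\alpha^*$. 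Your deferrals are the genuinely substantive steps, and you identify them correctly: the telescoping in relation (iii) works precisely because $\osf_{\beta\alpha^{-1}}=C(\alpha,\beta)$ by construction, and the multiplicativity of $\Psi$ rests on the $s_\beta s_\alpha^*$ forming a partial representation of $\F$ compatible with $\tau$.

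Two caveats. First, your claim that faithfulness of $\tau$ is ``where the inverse's well-definedness enters'' is a misdiagnosis: faithfulness is neither hypothesized nor needed. Since $\Psi$ is defined summand-wise on the direct sum $\bigoplus_g D_g\delta_g$, well-definedness is automatic; and once $\Phi$ and $\Psi$ are algebra homomorphisms, checking $\Psi\Phi=\mathrm{id}$ on the generators $p_A,s_a,s_a^*$ and $\Phi\Psi=\mathrm{id}$ on the spanning elements $a\delta_g$ already forces both composites to be identities everywhere --- generation plus multiplicativity does all the work, with no collapse to rule out. Second, you tacitly assume the partial skew group ring is associative; for partial actions this can fail in general, and the standard fix (which \cite{BGGV} relies on) is that here each ideal $D_g$ is unital, being generated by the idempotent $1_g$, which guarantees associativity. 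Neither point sinks the proposal, but both should be made explicit for the argument to be complete.
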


\section{Representations}

\begin{definition} Let $\osf$ be a subshift and $R$ be a ring. We denote by $\PP$ the free $R$-module generated by the the basis $\{\delta_x:x\in \osf\}$. 
\end{definition}

In what follows, we use the notation $[p]$ meaning that $[p]=1$ if the statement $p$ is true and $[p]=0$ otherwise.

Define, for all $A\in \TCB$, the $R$- endomorphism $P_A:\PP\rightarrow \PP$ in a basis element $\delta_x$, $x\in \osf$, by 
$$P_A(\delta_x)=[x\in A]\delta_x,$$ and, for each $a\in \alf$, define the $R$-endomorphisms $S_a,S_a^*:\PP\rightarrow \PP$ in a basis element $\delta_x$, $x=x_1x_2x_3...\in \osf$, by 
$$S_a(\delta_x)=[ax\in \osf]\delta_{ax}$$ and 
$$S_a^*(\delta_x)=[x_1=a]\delta_{\sigma(x)}.$$

Denote the $R$-algebra of all the $R$-endomophisms of $\PP$ by $End_R(\PP)$. Clearly, $P_A,S_a,S_a^*\in End_R(\PP)$, for each $A\in \TCB$ and $a\in \alf$.

We show next that the endomorphisms defined above form a representation of $\ualgshift$.

\begin{proposition}\label{representationtheorem}
Let $\osf$ be a subshift and $R$ be a ring. Then there exists an $R$-homomorphism $\pi:\ualgshift\rightarrow End_R(\PP)$ such that $$\pi(p_A)=P_A, \pi(s_a)=S_A \text{ and } \pi(s_a^*)=S_a^*$$ for each $A\in \TCB$ and $a\in \alf$.
\end{proposition}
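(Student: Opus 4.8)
The plan is to invoke the universal property of $\ualgshift$. Since the algebra is defined as the universal unital $R$-algebra on the generators $\{p_A\}$, $\{s_a, s_a^*\}$ subject to relations (i)--(iii), it suffices to exhibit elements of $End_R(\PP)$ playing the roles of these generators and to verify that they satisfy all the defining relations. The candidate assignments are already given: $p_A \mapsto P_A$, $s_a \mapsto S_a$, $s_a^* \mapsto S_a^*$. Once the relations are checked, the universal property produces the desired $R$-homomorphism $\pi$ automatically, so the entire content of the proof reduces to a relation-by-relation verification carried out on the basis $\{\delta_x : x \in \osf\}$.

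First I would dispatch the Boolean-algebra relations in (i). Working on a basis element $\delta_x$, the identity $P_{A\cap B}(\delta_x) = [x \in A\cap B]\delta_x = [x\in A][x \in B]\delta_x = P_A P_B(\delta_x)$ handles the product relation, and similarly $[x \in A\cup B] = [x\in A] + [x \in B] - [x \in A\cap B]$ gives the inclusion-exclusion relation; the normalizations $P_\osf = \mathrm{id}$ and $P_\emptyset = 0$ are immediate. Next I would verify the partial-isometry relations (ii): computing $S_a S_a^* S_a(\delta_x)$ on a word $x = x_1 x_2\cdots$ one tracks the indicator conditions through each map, and the two outer factors of $S_a$ and $S_a^*$ collapse the extra $[x_1 = a]$ condition appropriately, yielding $S_a(\delta_x)$ again; the dual relation for $S_a^*$ is symmetric.

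The main obstacle is relation (iii), $s_\beta s_\alpha^* s_\alpha s_\beta^* = p_{C(\alpha,\beta)}$, because it requires carefully composing the extended maps $S_\alpha = S_{\alpha_1}\cdots S_{\alpha_n}$ and $S_\alpha^* = S_{\alpha_n}^*\cdots S_{\alpha_1}^*$ and identifying the resulting indicator with membership in the set $C(\alpha,\beta) = \{\beta x \in \osf : \alpha x \in \osf\}$. Before tackling it I would establish the composite formulas $S_\beta(\delta_x) = [\beta x \in \osf]\,\delta_{\beta x}$ and $S_\alpha^*(\delta_x) = [x_{1,|\alpha|} = \alpha]\,\delta_{\sigma^{|\alpha|}(x)}$ by a short induction on word length, where the first uses that $\beta x \in \osf$ forces each intermediate truncation to lie in $\osf$ (since $\osf$ is shift-invariant), and the second similarly requires $x$ to begin with $\alpha$. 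Applying these to $\delta_x$ in the order $S_\beta^*$ then $S_\alpha$ then $S_\alpha^*$ then $S_\beta$, the condition that survives is precisely that $x$ starts with $\beta$, that writing $x = \beta y$ one has $\alpha y \in \osf$, and that the output returns to $\delta_x$; this is exactly $[x \in C(\alpha,\beta)]\delta_x = P_{C(\alpha,\beta)}(\delta_x)$. The only delicate point is bookkeeping the nested indicators so that no spurious membership condition is dropped or double-counted, and confirming that shift-invariance of $\osf$ guarantees every intermediate word in the composition genuinely lies in $\osf$; once this is handled, the verification of (iii) completes the proof.
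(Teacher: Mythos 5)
Your proposal is correct and follows essentially the same route as the paper: invoke the universal property of $\ualgshift$ and verify relations (i)--(iii) of Definition~\ref{universal properties} on the basis elements $\delta_x$, with the case analysis for (iii) reducing to whether $x$ begins with $\beta$ and whether $\alpha y\in\osf$ for $x=\beta y$. The only difference is cosmetic: you make explicit (via induction and shift-invariance of $\osf$) the composite formulas $S_\beta(\delta_x)=[\beta x\in\osf]\,\delta_{\beta x}$ and $S_\alpha^*(\delta_x)=[x_{1,|\alpha|}=\alpha]\,\delta_{\sigma^{|\alpha|}(x)}$, which the paper's computation uses implicitly.
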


\begin{proof} Since $\ualgshift$ is a universal algebra, the results follow once we check that the elements $P_A, S_a, S_a^*$, with $A\in \TCB$ and $a\in \alf$, satisfy the three items of Definition~\ref{universal properties}. 

The first item follows from direct calculations and is left to the reader.

For the second item, let $a\in \alf$ and $x\in \osf$. If $ax\in X$ then $$S_aS_a^*S_a(\delta_x)=S_aS_a^*(\delta_{ax})=S_a(\delta_x),$$ and if $ax\notin \osf$ then $S_a(\delta_x)=0$. Therefore, $S_aS_a^*S_a(\delta_x)=S_a(\delta_x)$ for each $x\in \osf$ and, consequently, $S_aS_a^*S_a=S_a$ for each $a\in \alf$. Similarly, we have that $S_a^*S_aS_a^*=S_a^*$ for each $a\in \alf$.

To prove the third item, let $\alpha,\beta \in \lang$, and $x\in \osf$. If $x\in C(\alpha,\beta)$, then $x=\beta y$ and $\alpha y\in \osf$.
Therefore, in this case, we have that
$$S_\beta S_\alpha^* S_\alpha S_\beta^*(\delta_x)=S_\beta S_\alpha^* S_\alpha S_\beta^*(\delta_{\beta y})=S_\beta S_\alpha^* S_\alpha (\delta_y)=$$
$$=S_\beta S_\alpha^*(\delta_{\alpha y})=S_\beta (\delta_y)=\delta_{\beta y}=\delta_x=P_{C(\alpha,\beta)}(\delta_x).$$
Suppose now that $x=x_1x_2x_3...\notin C(\alpha,\beta)$. Then $x_1...x_{|\beta|}\neq \beta$, or $x=\beta y
$ and $\alpha y\notin \osf$.
If $x_1...x_{|\beta|}\neq \beta$ then $S_\beta^*(\delta_x)=0$. If $x=\beta y$ and $\alpha y\notin \osf$ then $$S_\alpha S_\beta^*(\delta_x)=S_\alpha S_\beta^*(\delta_{\beta y})=S_\alpha (\delta_{y})=0.$$ 
Therefore, when $x\notin C(\alpha,\beta)$, we have that
$$S_\beta S_\alpha^* S_\alpha S_\beta^*(\delta_x)=0=p_{C(\alpha,\beta)}(\delta_x).$$ 
So, we have proved that for every $x\in \osf$ the equality 
$S_\beta S_\alpha^* S_\alpha S_\beta^*(\delta_x)=P_{C(\alpha,\beta)}(\delta_x)$ is true, and consequently $$S_\beta S_\alpha^* S_\alpha S_\beta^*=P_{C(\alpha,\beta)},$$ and so the third item of Definition~\ref{universal properties} is proved.
\end{proof}

\begin{remark} The homomorphism $\pi$ of Proposition~\ref{representationtheorem} is not always faithful. As an example, let $\alf=\{a\}$ and $\osf$ be the singleton set $\osf=\{a^\infty\}$. Then $S_a(\delta_{a^\infty})=\delta_{a^\infty}=P_X(\delta_{a^\infty})$ and hence $S_a=P_{\osf}$ (in $End_R(\PP)$). However, using Theorem~\ref{thm:set-theoretic-partial-action} we obtain that $s_a\neq p_{\osf}$ and so the homomorphism $\pi$ of Proposition~\ref{representationtheorem} is not faithful. 
\end{remark}

In our next result, we characterize when the homomorphism $\pi$ of Proposition~\ref{representationtheorem} is faithful. For this, we need to recall the notion of a cycle without exit defined in  \cite{reductiontheoremofsubshifts}.

\begin{definition}
    Let $\osf$ be a subshift, $c\in \lang$, and $\emptyset \neq A\in \TCB$. The pair $(c,\alpha)$ is a \emph{cycle} if $A\subseteq r(A,c)$. The cycle $(A,c)$ has no exit if $A=\{c^\infty\}$.
\end{definition}

\begin{theorem}\label{cyclenoexit} let $\osf$ be a subshift and $R$ be a ring. Then, the homomorphism $\pi:\ualgshift\rightarrow End_R(\PP)$ is faithful if and only if $\osf$ has no cycle without exit. 
\end{theorem}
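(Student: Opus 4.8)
The plan is to prove the two implications separately, exploiting the $\F$-grading of $\ualgshift$ afforded by the isomorphism with the partial skew group ring in Theorem~\ref{thm:set-theoretic-partial-action}. I write $t=\sum_{g\in\F}t_g$ for the homogeneous decomposition, note that $\pi(s_a)$ has ``degree'' $a$ and $\pi(s_a^*)$ degree $a^{-1}$, and let $\Psi\colon\ualgshift\to\udalgshift$ denote the projection onto the degree-$e$ component (the diagonal subalgebra).

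For the \emph{easy direction} (a cycle without exit forces $\pi$ to be non-faithful), suppose $(A,c)$ is a cycle with no exit, so $c\in\lang$ is a nonempty word and $A=\{c^\infty\}\in\TCB$. I would exhibit an explicit nonzero element of $\ker\pi$. Since $cc^\infty=c^\infty\in\osf$, a direct computation on basis vectors gives $\pi(s_cp_A)(\delta_x)=[x=c^\infty]\delta_{c^\infty}=\pi(p_A)(\delta_x)$ for every $x\in\osf$, hence $\pi(s_cp_A-p_A)=0$. On the other hand $s_cp_A$ is homogeneous of degree $[c]\neq e$ while $p_A$ is homogeneous of degree $e$, so the degree-$e$ component of $s_cp_A-p_A$ is $-p_A$; as $A\neq\emptyset$ we have $\pi(p_A)=P_A\neq0$, whence $p_A\neq0$ and therefore $s_cp_A-p_A\neq0$. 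Thus $\ker\pi\neq0$.

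For the \emph{hard direction} (no cycle without exit implies faithfulness) I would first record that $\pi$ is injective on the diagonal $\udalgshift$: each $d\in\udalgshift$ is an $R$-combination of the idempotents $s_\alpha p_B s_\alpha^*$, which $\pi$ sends to the diagonal operator $\delta_x\mapsto[x\in B']\delta_x$ for a set $B'\in\TCB$, so $\pi(d)$ is the diagonal operator with symbol $f_d=\sum_i r_i\mathbf 1_{B_i}$ on $\osf$; since distinct $\TCB$-sets are genuinely distinct subsets of $\osf$, $f_d\equiv0$ forces $d=0$. The whole statement then reduces to the single implication
$$(\dagger)\qquad \pi(u)=0\ \Longrightarrow\ \Psi(u)=0\qquad\text{for all }u\in\ualgshift.$$
Indeed, if $\pi(t)=0$ then $\pi(s_\mu^* t s_\nu)=0$ for all $\mu,\nu\in\lang$; the degree-$e$ part of $s_\mu^* t s_\nu$ is exactly $s_\mu^* t_{\mu\nu^{-1}} s_\nu$, so $(\dagger)$ gives $s_\mu^* t_{g} s_\nu=0$ whenever $\mu\nu^{-1}=g$. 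Taking $\mu=\alpha$, $\nu=\beta$ for the reduced form $g=\alpha\beta^{-1}$ and multiplying back by $s_\alpha$ and $s_\beta^*$ yields $p_{Z_\alpha}t_g p_{Z_\beta}=0$; since a homogeneous element of degree $\alpha\beta^{-1}$ is of the form $p_{Z_\alpha}t_g p_{Z_\beta}$ (a structural feature of the grading from \cite{BGGV}), this says $t_g=0$. As $g$ was arbitrary, $t=0$. To prove $(\dagger)$, observe that for $g\neq e$ the operator $\pi(t_g)$ sends $\delta_x$ to a multiple of $\delta_{\theta_g(x)}$, where $\theta_g$ is the partial map of the underlying partial action; writing $g=\alpha\beta^{-1}$ one has $\theta_g(x)=\alpha\sigma^{|\beta|}(x)$ on its domain, and an elementary computation shows that $\theta_g(x)=x$ forces $x$ to be eventually periodic. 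Hence, reading off the coefficient of $\delta_x$ in $\pi(u)(\delta_x)=0$, the off-diagonal terms contribute only when $x$ is fixed by some nontrivial $\theta_g$, so $f_{\Psi(u)}(x)=0$ for every $x$ not fixed by any of the (finitely many) nontrivial $\theta_g$ occurring in $u$; by the diagonal injectivity above, $f_{\Psi(u)}\equiv0$ then gives $\Psi(u)=0$.

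The main obstacle is precisely the passage from ``$f_{\Psi(u)}$ vanishes off the fixed-point set'' to ``$f_{\Psi(u)}\equiv0$'', i.e. the claim that no nonempty set in $\TCB$ consists entirely of points fixed by nontrivial group elements. This is the only place where the hypothesis is used, and the content is that the absence of cycles without exit is equivalent to topological freeness of the partial action: every nonempty $W\in\TCB$ contains a point fixed by no nontrivial $\theta_g$ (cf. the reduction theorem of \cite{reductiontheoremofsubshifts}). Granting this, a nonzero $\Psi(u)$ would have nonzero symbol on some nonempty $W\in\TCB$, and topological freeness produces $x\in W$ fixed by no nontrivial $\theta_g$, contradicting $f_{\Psi(u)}(x)=0$. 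I expect either to cite this equivalence directly or to prove it by a pigeonhole argument on the finitely many periods appearing in $u$, extracting from a bad $W$ a clopen singleton $\{c^\infty\}\in\TCB$, which is exactly a cycle without exit.
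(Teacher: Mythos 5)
Your proposal is correct, but only in the converse direction does it coincide with the paper's argument; in the forward direction you take a genuinely different, more self-contained route. The paper's converse is exactly your easy direction: the same witness $s_cp_A-p_A\in\ker\pi$ (verified by computing $S_cP_A=P_A$ on basis vectors) and the same appeal to the grading coming from Theorem~\ref{thm:set-theoretic-partial-action} to conclude $s_cp_A\neq p_A$; your extra remark that $p_A\neq 0$ because $\pi(p_A)=P_A\neq 0$ is a clean way to make the nonvanishing explicit. For the forward direction, however, the paper's entire proof is two lines: since $\pi(\gamma p_A)=\gamma P_A\neq 0$ for every $0\neq\gamma\in R$ and nonempty $A\in\TCB$, the Cuntz--Krieger uniqueness theorem of \cite{reductiontheoremofsubshifts} applies directly. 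What you do instead is essentially re-derive that uniqueness theorem for this particular representation: your diagonal injectivity is the hypothesis check $\pi(\gamma p_A)\neq 0$ in disguise, and your reduction $(\dagger)$ via the degree-$e$ projection $\Psi$, the compressions $s_\mu^* t s_\nu$, and the fixed-point analysis of the partial maps $\theta_g$ is the standard topological-freeness proof of such theorems. Your sketch is sound, and the step you flag as the main obstacle does go through along the lines you indicate: each nontrivial $\theta_{\alpha\beta^{-1}}$ has at most one fixed point (from $\alpha y=\beta y$ with distinct last letters one gets, say, $\alpha=\beta d$ and $y=d^\infty$), so a ``bad'' $W\in\TCB$ is necessarily finite; moreover $\TCB$ is closed under relative ranges, so cutting $W$ down by cylinders and applying $r(\cdot,\beta)$ produces a singleton $\{d^\infty\}\in\TCB$, which is precisely a cycle without exit. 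What the paper's route buys is brevity and robustness (the combinatorics is outsourced to the cited theorem); what yours buys is independence from that black box, at the price of invoking several structural facts from \cite{BGGV} that would need full justification, notably that homogeneous elements of degree $g=\alpha\beta^{-1}$ in reduced form satisfy $t_g=p_{Z_\alpha}t_gp_{Z_\beta}$, and that $\udalgshift$ is faithfully realized as an algebra of functions on $\osf$.
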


\begin{proof} Suppose that there is no cycle without exit. Then, since $\pi(\gamma p_A)=\gamma P_A\neq 0$ for each $0\neq \gamma \in R$, it follows from the Cuntz-Krieger Uniqueness Theorem for subshift algebras (see \cite{reductiontheoremofsubshifts}) that $\pi$ is faithful.

For the converse, suppose that there exists a cycle without exit $(A,c)$ in $\osf$. Then, for each $x\in \osf$, we have that
$$S_c P_A(\delta_x)=[x\in A]S_c(\delta_x)=[x\in A]S_c(\delta_{c^\infty})=[x\in A]\delta_{c^\infty}=[x\in A]p_A(\delta_x),$$
and therefore $S_cP_A=P_A$. So, $\pi(s_cp_A)=\pi(p_A)$ but, from the grading given by Theorem~\ref{thm:set-theoretic-partial-action} we obtain that $s_cp_A\neq p_A$, and hence $\pi$ is not faithful.
\end{proof}

\begin{remark}
The homomorphism $\pi$ of Proposition~\ref{representationtheorem} is usually not irreducible. For instance, let $\osf$ be a subshift that contains an element $y$ and an element $z$ such that $\sigma^n(y)\neq \sigma^m(z)$ for all $n,m \in \N$. Let $Y\subseteq \osf$ be the set
$$Y=\{x\in \osf: \sigma^n(x)=\sigma^m(z) \text{ for some }n,m\in \N\}$$
and let $M$ be the $R$-submodule of $\PP$ generated by $\{\delta_x:x\in Y\}$. It follows from the definition of $P_A, S_a$, and $S_a^*$ that $P_A(M)\subseteq M$, $S_A(M)\subseteq M$, and $S_a^*(M)\subseteq M$, for each $A\in \TCB$ and $a\in \alf$. Therefore, $M$ is an invariant submodule of $\pi$ (in the sense that $\pi(\ualgshift)(M)\subseteq M$). 
\end{remark}

The set $Y$ hints at how to build irreducible components of the representation $\pi$ and motivates the following definition.

\begin{definition}\label{jacarenapraia} Let $\osf$ be a subshift. We say that two elements $x,y\in \osf$ are equivalent, and write $x\thicksim y$, if $\sigma^n(x)=\sigma^m(y)$ for some $n,m\in\N$. 
\end{definition}

\begin{remark}\label{equiv1} Notice that $x\thicksim y$ if, and only if, there exists $c,d\in \lang$ and $\xi\in \osf$ such that $x=c\xi$ and $y=d\xi$. %To see this, let $x\thicksim y$ in $\osf$. From the previous definition, this means that there exists $n,m\in \N$ such that $\sigma^n(x)=\sigma^m(y)$. Write $x=x_1x_2...$ and $y=y_1y_2...$ where $x_i,y_i\in \alf$ for each $i\in \N$. Define $c=x_1...x_n$, $d=y_1...y_m$ and $\sigma^n(x)=\xi=\sigma^m(y)$. Then $x=c\xi$ and $y=d\xi$. On the other hand, if $x=c\xi$ and $y=d\xi$ then obviously $x\thicksim y$.
\end{remark}

The relation $\thicksim$ of Definition~\ref{jacarenapraia} is an equivalence relation. For an element $x\in \osf$, we denote its equivalence class by $[x]$ and define $\PP_{[x]}$ as the $R$-submodule of $\PP$ generated by $\{\delta_y:y\in [x]\}$. Let $\widetilde{\osf}=\{[x]:x\in X\}$ and observe that
$$\PP=\bigoplus\limits_{[x]\in \widetilde{\osf}}\PP_{[x]}.$$ 

We show below that each submodule $\PP_{[x]}$ is an irreducible component of the representation $\pi$. 

\begin{theorem}\label{invariant submodules}
    Let $\osf$ be a subshift, $R$ be a ring, and let $\pi:\ualgshift\rightarrow End_R(\PP)$ be the homomorphism of Proposition~\ref{representationtheorem}. 
    \begin{enumerate}
        \item For each $x\in \osf$, the submodule $\PP_{[x]}$ is $\pi$-invariant, i.e. $\pi(\ualgshift)(\PP_{[x]})\subseteq \PP_{[x]}$;
        \item If $R$ is a field then, for each $x\in \osf$, the submodule $\PP_{[x]}$ is irreducible, in the sense that there is no submodule $0\neq Y\varsubsetneq \PP_{[x]}$ such that $\pi(\ualgshift)(Y)\subseteq Y$.
    \end{enumerate}
\end{theorem}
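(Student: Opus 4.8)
The plan is to handle the two parts separately, in each case exploiting the explicit action of the generators on basis vectors. For part (1), since $\pi$ is an algebra homomorphism, $\pi(\ualgshift)$ is the $R$-subalgebra of $End_R(\PP)$ generated by the operators $P_A$, $S_a$ and $S_a^*$; so it suffices to show that each of these sends a basis vector $\delta_y$ with $y\thicksim x$ back into $\PP_{[x]}$, and then to close up under products and $R$-linear combinations. For $P_A$ this is immediate, since $P_A(\delta_y)\in\{0,\delta_y\}$. For $S_a$ and $S_a^*$ I would observe that, whenever the relevant indicator is nonzero, $S_a(\delta_y)=\delta_{ay}$ with $\sigma(ay)=y$, and $S_a^*(\delta_y)=\delta_{\sigma(y)}$; in both cases the new index is $\thicksim$-equivalent to $y$ and hence lies in $[x]$. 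This yields $\pi(\ualgshift)(\PP_{[x]})\subseteq\PP_{[x]}$.

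For part (2), assume $R$ is a field and let $0\neq Y\subseteq\PP_{[x]}$ be invariant. The strategy consists of two moves: first isolate a single basis vector inside $Y$, and then transport it to every other basis vector of $\PP_{[x]}$. To isolate, I would pick $0\neq v\in Y$ and write $v=\sum_{i=1}^{n}r_i\delta_{y_i}$ with the $y_i\in[x]$ pairwise distinct and all $r_i\neq 0$. Choosing $N$ large enough that the length-$N$ initial blocks $(y_1)_{1,N},\dots,(y_n)_{1,N}$ are pairwise distinct, and setting $A:=Z_{(y_1)_{1,N}}=C(\eword,(y_1)_{1,N})\in\TCB$, one gets $y_1\in A$ but $y_i\notin A$ for $i\geq 2$, so that $P_A(v)=r_1\delta_{y_1}$. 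Since $R$ is a field, $r_1$ is invertible and therefore $\delta_{y_1}=r_1^{-1}P_A(v)\in Y$.

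To transport, I would fix $y:=y_1$ with $\delta_y\in Y$ and take any $z\in[x]$. By Remark~\ref{equiv1} there exist $c,d\in\lang$ and $\xi\in\osf$ with $y=c\xi$ and $z=d\xi$. Stripping the letters of $c$ one at a time gives $S_c^*(\delta_{c\xi})=\delta_\xi$, and prepending the letters of $d$ gives $S_d(\delta_\xi)=\delta_{d\xi}=\delta_z$; the latter computation is legitimate because every suffix of $z=d\xi$ lies in $\osf$ (as $\osf$ is shift-invariant), so none of the intermediate indicators vanish. Hence $\delta_z=\pi(s_ds_c^*)(\delta_y)\in Y$, and since $z\in[x]$ is arbitrary, $Y$ contains every $\delta_z$ with $z\in[x]$, forcing $Y=\PP_{[x]}$. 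I expect the isolation step to be the crux: it is where finitely many distinct infinite sequences must be separated by a single element of $\TCB$ (realized concretely as a cylinder $Z_\beta$), and it is where the field hypothesis is genuinely used, to invert the surviving coefficient. Once a single $\delta_y$ sits inside $Y$, the transport step is routine given the description of $\thicksim$ in Remark~\ref{equiv1}.
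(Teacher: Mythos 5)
Your proposal is correct and follows essentially the same route as the paper's own proof: part (1) by checking the generators $P_A$, $S_a$, $S_a^*$ on basis vectors $\delta_y$ with $y\thicksim x$, and part (2) by first isolating $\delta_{y_1}$ inside $Y$ via a cylinder projection $P_{Z_\mu}$ on a separating prefix (inverting the coefficient thanks to the field hypothesis) and then transporting it to any $\delta_z$, $z\in[x]$, by applying $\pi(s_d s_c^*)$ as in Remark~\ref{equiv1}. Your extra justification that the intermediate indicators in the transport step do not vanish (shift-invariance of $\osf$) is a point the paper leaves implicit, but the argument is the same.
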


\begin{proof} We begin with the proof of the first item. For this, fix an $x\in \osf$ and let $y\in \osf$ be such that $y\thicksim x$. Then, for $a\in \alf$, we have that $S_a(\delta_y)=[ay\in \osf]\delta_{ay}$. So, $S_a(\delta_y)=0$ except in case $ay\in \osf$, which implies that $ay\thicksim x$ (since $ay\thicksim y$ and $y\thicksim x$). Therefore, $\pi(s_a)(\PP_{[x]})\subseteq \PP_{[x]}$. Similarly, $\pi(s_a^*)(\PP_{[x]})\subseteq \PP_{[x]}$ and $\pi(p_A)(\PP_{[x]})\subseteq \PP_{[x]}$ for each $a\in \alf$ and $A\in \TCB$. Since $\pi$ is a homomorphism, the result follows. 

We proceed to the proof of the second item. Let $x\in \osf$ and let $0\neq Y\subseteq \PP_{[x]}$ be a $\pi$-invariant submodule. We show that $Y=\PP_{[x]}$. Let $0\neq y\in Y$. Then $y=\sum\limits_{i=1}^n \lambda_i \delta_{x^i}$, where $0\neq \lambda_i\in R$ and
%$\lambda_j\neq \lambda_j$ for $i\neq j$,
$x^i\thicksim x$ for each $i$, and $x^i\neq x^j$ for each $i\neq j$. Write each $x^i$ as $x^i=x_1^ix_2^ix_3^i...$, $i\in \{1,...,n\}$.
Notice that, since $x^i\neq x^j$ for each $i\neq j$, there exists an $m\in \N$ such that 
$$x_1^ix_2^i...x_m^i\neq x_1^jx_2^j...x_m^j$$ for each $i,j\in \{1,...,n\}$ with $i\neq j$. 

Let $\mu=x_1^1x_2^1...x_m^1$. Then, $$\pi(\lambda_1^{-1}p_{Z_\mu})(y)=\delta_{x^1}$$ and, since $Y$ is $\pi$-invariant, we conclude that $\delta_{x^1}\in Y$.

To finish, we show that $\delta_z\in Y$ for each $z\in [x]$. Let $z\in [x]$. Then $z\thicksim x^1$ 
 and, from Remark~\ref{equiv1}, there exists $c,d\in \lang$ and $\xi\in \osf$ such that $x^1=c\xi$ and $z=d\xi$. Therefore, $$\pi(s_ds_c^*)(\delta_{x^1})=S_dS_c^*(\delta_{x^1})=S_dS_c^*(\delta_{c\xi})=S_d(\delta_{\xi})=\delta_{d\xi}=\delta_z$$
and, since $\delta_{x^1}\in Y$ and $Y$ is $\pi$-invariant, we conclude that $\delta_z\in Y$. Hence, $\PP_{[x]}\subseteq Y$ as desired. 
\end{proof}

We proved above that, for each $x\in \osf$, the $R$-submodule $\PP_{[x]}$ is $\pi$-invariant. Therefore, $\PP_{[x]}$ is a left $\ualgshift$-module, with the left product defined by $\mu.z=\pi(\mu)(z)$, for each $\mu \in \ualgshift$ and $z\in \PP_{[x]}$. Let $End_{\ualgshift}(\PP_{[x]})$ denote the set of all $\ualgshift$-homomorphisms of $\PP_{[x]}$. In the next proposition, we characterize this set.

\begin{proposition}\label{endo} Let $\osf$ be a subshift and $R$ be a ring. Then, $End_{\ualgshift}(\PP_{[x]})$ is isomorphic to $R$.    
\end{proposition}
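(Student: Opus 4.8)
The plan is to show that any $\ualgshift$-homomorphism $T$ of $\PP_{[x]}$ is scalar multiplication by some $\lambda \in R$, and that this assignment is a ring isomorphism. Since $\PP_{[x]}$ is generated as a module by the basis elements $\{\delta_y : y \in [x]\}$, and since Theorem~\ref{invariant submodules} shows these are all linked by the action (any $\delta_z$ is reachable from any $\delta_{x^1}$ via some $\pi(s_d s_c^*)$), the key observation is that $T$ is completely determined by its value on a single basis element. So first I would fix a representative, say $\delta_x$ itself, and examine $T(\delta_x)$.

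The crux is to pin down $T(\delta_x)$ using the idempotent structure. The element $\delta_x$ is fixed by the projection $\pi(p_{Z_\mu})$ for an appropriate cylinder word $\mu$ — more precisely, one can find $A \in \TCB$ (e.g.\ a cylinder, or a singleton-type set distinguishing $x$) such that $\pi(p_A)(\delta_x) = \delta_x$. Since $T$ is a $\ualgshift$-homomorphism, $T(\delta_x) = T(\pi(p_A)\delta_x) = \pi(p_A)\,T(\delta_x)$, which forces $T(\delta_x)$ to lie in the image of $P_A$, i.e.\ to be supported on basis elements lying in $A$. The main obstacle is the separation step: I must argue that $T(\delta_x)$ is in fact a scalar multiple of $\delta_x$ alone, with no other basis elements appearing. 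For this I would use the same finite-separation trick as in the proof of Theorem~\ref{invariant submodules}: if $T(\delta_x) = \sum_i \lambda_i \delta_{z^i}$, then since each $z^i \thicksim x$ these are finitely many distinct elements, and I can choose a cylinder word $\mu$ (a sufficiently long prefix) so that $\pi(p_{Z_\mu})$ projects onto exactly $\delta_x$ while killing every competing $\delta_{z^i}$; applying $T$ to the identity $\pi(p_{Z_\mu})\delta_x = \delta_x$ and using homomorphism-invariance isolates the coefficient on $\delta_x$ and shows all other $\lambda_i$ vanish. This yields $T(\delta_x) = \lambda\,\delta_x$ for a unique $\lambda \in R$.

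Next I would propagate this to all of $\PP_{[x]}$. Given any $z \in [x]$, Remark~\ref{equiv1} supplies $c,d \in \lang$ and $\xi \in \osf$ with $x = c\xi$ and $z = d\xi$, and the computation in Theorem~\ref{invariant submodules} gives $\pi(s_d s_c^*)(\delta_x) = \delta_z$. Applying the $\ualgshift$-homomorphism property, $T(\delta_z) = T(\pi(s_d s_c^*)\delta_x) = \pi(s_d s_c^*)\,T(\delta_x) = \pi(s_d s_c^*)(\lambda \delta_x) = \lambda\,\delta_z$. By $R$-linearity $T$ therefore acts as multiplication by $\lambda$ on every generator, hence on all of $\PP_{[x]}$.

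Finally I would define $\Psi : End_{\ualgshift}(\PP_{[x]}) \to R$ by $\Psi(T) = \lambda_T$, the scalar just produced. It is straightforward that $\Psi$ is additive and respects composition (composition of scalar multiplications multiplies the scalars, since $R$ is commutative and the actions commute), so $\Psi$ is a ring homomorphism; it is injective because $T$ is determined by $\lambda_T$, and surjective because for every $\lambda \in R$ the map $z \mapsto \lambda z$ is a genuine $\ualgshift$-homomorphism of $\PP_{[x]}$ (this uses $R$-bilinearity of the module action and that $R$ is central). Hence $\Psi$ is the desired isomorphism $End_{\ualgshift}(\PP_{[x]}) \cong R$. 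I expect the separation argument isolating $T(\delta_x)$ to be the only genuinely delicate point; everything else is a direct transfer of the techniques already established in Theorem~\ref{invariant submodules}.
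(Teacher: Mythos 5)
Your proposal is correct and follows essentially the same route as the paper's proof: pin down the image of a basis vector using a cylinder projection $P_{Z_\mu}$ with a sufficiently long prefix to kill all other terms (freeness of $\PP_{[x]}$ gives the coefficient comparison), propagate via $\pi(s_d s_c^*)$ using Remark~\ref{equiv1}, and conclude that every endomorphism is scalar multiplication. Your version is mildly streamlined — anchoring at $\delta_x$ and propagating avoids the paper's case split on whether $\varphi(\delta_z)$ vanishes, and you make explicit the routine verification that $\varphi\mapsto\lambda_\varphi$ is a ring isomorphism, which the paper leaves implicit — but these are cosmetic differences, not a different argument.
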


\begin{proof} Let $\varphi\in End_{\ualgshift}(\PP_{[x]})$. For $y,z\in [x]$, let $c,d\in \lang$ and $\xi\in \osf$ as in Remark \ref{equiv1}, so that $y=c\xi$ and $z=d\xi$. Then, since $\varphi$ is an $\ualgshift$-homomorphism, we have that
$$\varphi(\delta_y)=\varphi(\delta_{c\xi})=\varphi(S_cS_d^*(\delta_{d\xi}))=\varphi(S_cS_d^*(\delta_z))=S_cS_d^*\varphi(\delta_z).$$ Therefore, $\varphi=0$ if $\varphi(\delta_z)=0$ for some $z\in [x]$.

Suppose that $\varphi(\delta_z)\neq 0$ for each $z\in [x]$. For $z\in [x]$, write $\varphi(\delta_z)=\sum\limits_{i=1}^n\gamma_i \delta_{x^i}$, where $x^i\in [x]$, $0\neq \gamma_i\in R$, and $x^i\neq x^j$ for all $i\neq j$. Suppose that $x^k\neq z$ for some $k$. Let $m\in \N$ be such that $x_1^kx_2^k...x_m^k\neq z_1z_2...z_m$ and $x_1^kx_2^k...x_m^k\neq x_1^ix_2^i...x_m^i$, for each $i\neq k$. 
Let $\alpha=x_1^kx_2^k...x_m^k$. Then,
$$\gamma_k\delta_{x^k}=P_{Z_\alpha}(\gamma_k\delta_{x^k})=P_{Z_\alpha}(\sum\limits_{i=1}^n \gamma_i \delta_{x^i})=P_{Z_\alpha}(\varphi(\delta_z))=\varphi(P_{Z_\alpha}(\delta_z))=0,$$
where the second to last equality is true because $\varphi$ is $\ualgshift$-invariant. We conclude that $\delta_{x^k}=0$, which is a contradiction.  Therefore,  for all $z\in [x]$, we have that $\varphi(\delta_z)=\gamma_z\delta_z$ for some $\gamma_z\in R$. Next, we show that the map $z\rightarrow \gamma_z$ is constant. Let $z\in [x]$. Write $x=c\xi$ and $z=d\xi$, where $c,d\in \lang$ and $\xi\in \osf$ (see Remark~\ref{equiv1}).
Then,
$$\gamma_x\delta_x=\varphi(\delta_x)=\varphi(S_cS_d^*(\delta_z))=S_cS_d^*(\varphi(\delta_z))=\gamma_zS_cS_d^*(\delta_z)=\gamma_z\delta_x,$$
and hence $\gamma_z=\gamma_x$ for each $z\in [x]$. We conclude that $\varphi(\mu)=\gamma_x \mu$ for each $\mu\in \ualgshift$. Denote the element $\gamma_x$ by $\gamma_\varphi$. Then,  $End_{\ualgshift}(\PP_{[x]})$ and $R$ are isomorphic via the map $$End_{\ualgshift}(\PP_{[x]})\ni \varphi\mapsto \lambda_\varphi \in R,$$ and the proof is finished.
\end{proof}

Next, we will describe when the irreducible representations of $\ualgshift$ induced by the modules $\PP_{[x]}$, $x\in \osf$, are equivalent. Before, we recall the definition of equivalent homomorphisms.

\begin{definition}\label{equivrepres} Let $\osf$ be a subshift, $R$ be a ring, and $M$ and $N$ be two $R$-modules. We say that the $R$-homomorphisms $\varphi: \ualgshift \rightarrow End_R(M)$ and $\phi: \ualgshift \rightarrow End_R(N)$ are equivalent if there exists an $R$-module isomorphism $U:M\rightarrow N$ such that, for each $\mu\in \ualgshift$, the diagram below commutes.
\begin{displaymath}
\xymatrix{
M \ar[r]^{\varphi(\mu)} \ar[d]_{U} &
M \ar[d]^{U} \\
N \ar[r]_{\phi(\mu)} & N }
\end{displaymath}
\end{definition}

Let $\pi:\ualgshift\rightarrow End_R(\PP)$ be the homomorphism of Proposition~\ref{representationtheorem}. By Theorem~\ref{invariant submodules}, for each $x\in \osf$, the submodule $\PP_{[x]}$ is $\pi$-invariant and hence the restriction of $\pi$ to $\PP_{[x]}$ is a homomorphism, which we still denote by $\pi$. In this manner, we obtain a new homomorphism $\pi:\ualgshift\rightarrow End_R(\PP_{[x]})$, which is irreducible (in case $R$ is a field, see Theorem~\ref{invariant submodules}). Below we describe when such homomorphisms are equivalent.

\begin{proposition}\label{tinto} Let $\osf$ be a subshift, $R$ be a ring, and $x,y\in \osf$. Then, the homomorphisms $\pi:\ualgshift\rightarrow End_R(\PP_{[x]})$ and $\pi:\ualgshift\rightarrow End_R(\PP_{[y]})$ are equivalent if, and only if, $[x]=[y]$.
\end{proposition}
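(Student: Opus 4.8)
The plan is to prove both directions separately, with the forward direction being the substantive one.

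For the backward direction, suppose $[x]=[y]$. Then $\PP_{[x]}$ and $\PP_{[y]}$ are literally the same submodule of $\PP$ (the equivalence class determines the generating set of basis vectors), so the identity map $U=\mathrm{id}$ is an $R$-module isomorphism intertwining $\pi$ with itself. This makes the diagram in Definition~\ref{equivrepres} commute trivially, so the two representations are equivalent.

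For the forward direction, I would assume the two representations are equivalent via an $R$-module isomorphism $U:\PP_{[x]}\to \PP_{[y]}$ satisfying $U\pi(\mu)=\pi(\mu)U$ for all $\mu\in\ualgshift$, and derive $[x]=[y]$. The key idea is to extract, for each basis element $\delta_z$ with $z\in[x]$, a projection that isolates it. First I would show that $U$ sends each $\delta_z$ to a scalar multiple of a single basis vector: applying the intertwining relation to a cylinder projection $P_{Z_\mu}$ that picks out $z$ within the (finite) support of $U(\delta_z)$ — exactly as in the proof of Proposition~\ref{endo} — forces $U(\delta_z)=\gamma_z\delta_{w_z}$ for a single $w_z\in[y]$ and $0\neq\gamma_z\in R$. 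Then, using the intertwining with $S_c$ and $S_d^*$ to connect $\delta_z$ and $\delta_{z'}$ for $z,z'\in[x]$ (writing $z=c\xi$, $z'=d\xi$ via Remark~\ref{equiv1}), I would show that $U(\delta_{z'})$ is determined by $U(\delta_z)$, specifically that $w_{z'}=d\xi'$ whenever $w_z = c\xi'$, so that the assignment $z\mapsto w_z$ respects the tail structure. In particular, picking $z=x$, we get $U(\delta_x)=\gamma_x\delta_{w}$ for some $w\in[y]$; since $w\in[y]$ means $w\sim y$, and I will show $w\sim x$ as well, transitivity gives $[x]=[y]$.

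The main obstacle, and the step to carry out most carefully, is establishing that $w=w_x$ actually lies in $[x]$ (equivalently, that $x\sim w$). The natural route is: since $U$ is a bijection intertwining the action, and since every $\delta_{z'}$ with $z'\in[x]$ is reachable from $\delta_x$ via operators of the form $S_dS_c^*$ (as $z'\sim x$), applying $U$ shows every $U(\delta_{z'})$ is reachable from $U(\delta_x)=\gamma_x\delta_w$ via the same operators; surjectivity of $U$ onto $\PP_{[y]}$ then forces every basis vector $\delta_{y'}$, $y'\in[y]$, to appear, which pins down $w\sim y'$ for all such $y'$ — consistent — but to get $w\sim x$ I would instead use that $S_c^*$ acting on $\delta_w$ is nonzero only when the first letters of $w$ match $c$, and track how the word $c$ (with $x=c\xi$) interacts with $w$. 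Concretely, choosing $z'=\sigma(x)$ and comparing $U(\delta_x)$ with $U(S_{x_1}^*\delta_x)$ forces $w$ and $x$ to share a common tail after finitely many shifts, i.e. $\sigma^n(w)=\sigma^m(x)$, giving $w\sim x$ and hence $[x]=[y]$.
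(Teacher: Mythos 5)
Your proposal is correct and can be completed as outlined, but it is organized quite differently from the paper's proof, which is a one-stroke contrapositive: assuming $[x]\neq[y]$, the paper writes $U(\delta_x)=\sum_{i=1}^n\gamma_i\delta_{y^i}$ with each $y^i\in[y]$, observes that $y^i\neq x$ for all $i$, picks $m$ so large that the prefix $\alpha=x_1\cdots x_m$ differs from the length-$m$ prefix of every $y^i$, and then computes $U(\delta_x)=U(P_{Z_\alpha}(\delta_x))=P_{Z_\alpha}(U(\delta_x))=0$, contradicting injectivity of $U$. Your direct argument runs on the same engine (intertwining with cylinder projections $P_{Z_\alpha}$ plus injectivity of $U$) but in two stages, and both stages are sound: the singleton-support step transfers from Proposition~\ref{endo} without trouble, and your shift-operator step works because $U(\delta_{\sigma(x)})=S_{x_1}^*U(\delta_x)=\gamma_x S_{x_1}^*(\delta_w)$ must be nonzero, forcing $w_1=x_1$, and induction gives $w_k=x_k$ for all $k$ --- note this actually yields $w=x$ on the nose, not merely $\sigma^n(w)=\sigma^m(x)$ as you state, so your ``main obstacle'' resolves more cleanly than you anticipate. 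In fact your route can be streamlined so that the shift-tracking step disappears entirely: applying the intertwining relation with $P_{Z_{z_1\cdots z_m}}$ for \emph{every} $m$ gives $U(\delta_z)=P_{Z_{z_1\cdots z_m}}(U(\delta_z))$, so every element of the (finite, nonempty) support of $U(\delta_z)$ begins with $z_1\cdots z_m$ for all $m$, whence $U(\delta_z)=\gamma_z\delta_z$ exactly; since the support lies in $[y]$, this gives $z\in[y]$ and hence $[x]=[y]$ immediately. The trade-off between the two approaches: the paper's contrapositive is shorter, while your direct argument proves a stronger structural fact --- any intertwiner $U$ fixes each basis line, sending $\delta_z$ to a scalar multiple of $\delta_z$ --- which is consistent with, and essentially re-derives, Proposition~\ref{endo}.
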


\begin{proof} Let $x,y\in \osf$. Clearly, the two homomorphisms are equivalent if  $[x]=[y]$. 

Suppose that $[x]\neq [y]$. We show that the two homomorphisms are not equivalent. Suppose otherwise, that is, suppose that there exists an isomorphism $U:\PP_{[x]}\rightarrow \PP_{[y]}$ such that $U\circ \pi(\mu)=\pi(\mu)\circ U$ for each $\mu \in \ualgshift$. Notice that $U(\delta_x)=\sum\limits_{i=1}^n\gamma_i \delta_{y^i}$. Write $x=x_1x_2x_3...$ and $y^i=y_1^iy_2^iy_3^i...$, where $x_j, y_j^i\in \alf$. Since $[x]\neq [y]$ and $y^i\in [y]$ for each $i$, we have that $y^i\neq x$ for each $i\in \{1,...,n\}$. Therefore, there exists an $m\in \N$ such that $x_1x_2...x_m\neq y_1^iy_2^i...y_m^i$ for each $i\in \{1,...,n\}$. Let $\alpha=x_1x_2....x_m$. Then,
$$U(\delta_x)=U(P_{Z_\alpha}(\delta_x))=U(\pi(p_{Z_\alpha})(\delta_x))=\pi(p_{Z_\alpha})(U(\delta_x))=$$
$$=\pi(p_{Z_\alpha})(\sum\limits_{i=1}^n\gamma_i \delta_{y^i})=\sum\limits_{i=1}^n \gamma_i P_{Z_\alpha}(\delta_{y^i})=0.$$ %where the third equality holds since $U\circ \pi(p_{Z_\alpha})=\pi(p_{Z_\alpha})\circ U$ and the last one holds since $P_{Z_{\alpha}}(\delta_{y^i})=0$ for each $i$. 
Therefore, $U(\delta_x)=0$, which is a contradiction (since $U$ is an isomorphism). Hence, if $[x]\neq [y]$ then the two homomorphisms of the proposition are not equivalent, as desired. \end{proof}

\section{Minimal left ideals}

In this section, we show that the irreducible representations described before are isomorphic to minimal left ideals of $\ualgshift$. Recall that a left ideal $I$ of an algebra $A$ is said to be minimal if it is nonzero and the only left ideals of $A$ that it contains are $0$ and $I$. 
In the context of Leavitt path algebras, left minimal ideals correspond to line points (see \cite{Socle, Socle1}. We generalize this concept to subshifts below. 

\begin{definition}\label{linepath} Let $\osf$ be a subshift. We say that an element $q=a_0a_1a_2a_3...\in \osf$ is a line path if $Z_{a_{0}}=\{q\}$ and for every $\beta\in \lang$ and $k\in \N$ we have that $\beta^\infty\neq a_ka_{k+1}a_{k+2}\ldots$.
\end{definition}

Clearly, for each element $x\in \ualgshift$, the set $$\ualgshift x=\{yx:y\in \ualgshift\}$$ is a left ideal of $\ualgshift$. We are interested in left ideals induced by line paths, that is, in ideals of the form $\ualgshift p_{Z_a}$, where $q=aa_1a_2a_3...$ is a line path. We further characterize such ideals in the lemma below.

\begin{lemma}\label{linearspan} Let $\osf$ be a subshift and $q=aa_1a_2a_3...$ be a line path.  

\begin{enumerate}
\item If $c,d\in \lang$ and $A\in \TCB$ are such that $s_c p_A s_d^*p_{Z_a}\neq 0$ then,  $s_c p_A s_d^* p_{Z_a}=s_c s_d^* p_{Z_a}$.

\item The left ideal $\ualgshift p_{Z_a}$ is the linear span of the set
$$\{s_c s_d^* p_{Z_a}:c,d\in \lang \text{ and }c_{|c|}\neq d_{|d|}\}.$$
\end{enumerate}

\end{lemma}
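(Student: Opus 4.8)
The plan is to establish (1) first, by a direct computation in $\ualgshift$ that exploits the single feature of a line path that is relevant here, namely $Z_a=\{q\}$, and then to deduce (2) from (1) together with the standard description of $\ualgshift$ as a linear span of monomials. Throughout I would use the following relations, all of which hold in $\ualgshift$ by (or as immediate consequences of) the defining relations (i)--(iii) (see \cite{BGGV}): $p_A p_B=p_{A\cap B}$; $s_e s_e^*=p_{Z_e}$ for $e\in\alf$; and, for every $A\in\TCB$ and $d\in\lang$, that $r(A,d)\in\TCB$ and $s_d^* p_A=p_{r(A,d)}s_d^*$.

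For (1), the key point is that $Z_a=\{q\}$ forces $r(Z_a,d)$ to contain at most one element for every $d\in\lang$, since $r(Z_a,d)=\{x\in\osf:dx=q\}$ is empty unless $d$ is a prefix of $q$, in which case it is the singleton $\{\sigma^{|d|}(q)\}$. I would then push $p_{Z_a}$ to the left through $s_d^*$ and collapse the projections:
\[
p_A s_d^* p_{Z_a}=p_A\,p_{r(Z_a,d)}\,s_d^*=p_{A\cap r(Z_a,d)}\,s_d^*.
\]
Because $r(Z_a,d)$ has at most one point, $A\cap r(Z_a,d)$ is either $r(Z_a,d)$ or $\emptyset$, so $p_A s_d^* p_{Z_a}$ is either $p_{r(Z_a,d)}s_d^*=s_d^* p_{Z_a}$ or $0$. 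Left-multiplying by $s_c$ yields that $s_c p_A s_d^* p_{Z_a}$ is either $s_c s_d^* p_{Z_a}$ or $0$; under the nonvanishing hypothesis it must be the former, which is precisely (1).

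For (2) I would start from the standard fact (\cite{BGGV}) that $\ualgshift$ is the $R$-linear span of the monomials $s_c p_A s_d^*$ with $c,d\in\lang$ and $A\in\TCB$. Hence $\ualgshift p_{Z_a}$ is spanned by the elements $s_c p_A s_d^* p_{Z_a}$, and by (1) each of these is either $0$ or $s_c s_d^* p_{Z_a}$; together with the obvious inclusion $s_c s_d^* p_{Z_a}=s_c p_{\osf} s_d^* p_{Z_a}\in\ualgshift p_{Z_a}$ this already gives $\ualgshift p_{Z_a}=\vecspan\{s_c s_d^* p_{Z_a}:c,d\in\lang\}$. To reach the stated generating set I would cancel common last letters: if $c=c'e$ and $d=d'e$ share a final letter $e$, then, using $s_e s_e^*=p_{Z_e}$,
\[
s_c s_d^* p_{Z_a}=s_{c'}\,p_{Z_e}\,s_{d'}^* p_{Z_a},
\]
which by (1) equals $0$ or $s_{c'}s_{d'}^* p_{Z_a}$. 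Each such step lowers $|c|+|d|$ by two, so after finitely many cancellations every nonzero generator becomes $s_{\tilde c}s_{\tilde d}^* p_{Z_a}$ with $\tilde c,\tilde d$ having no common last letter, i.e. $\tilde c_{|\tilde c|}\neq \tilde d_{|\tilde d|}$ (reading this condition as vacuously true once one of the words is empty). This yields the asserted spanning set.

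The heart of the argument, and the only place the hypothesis is used, is the observation in (1) that $Z_a=\{q\}$ makes every relative range $r(Z_a,d)$ a singleton or empty, so that the factor $p_A$ is forced to act as an identity or annihilator after being transported through $s_d^*$. The main care-points I anticipate are, first, confirming that the commutation relation $s_d^* p_A=p_{r(A,d)}s_d^*$ is available in the generality needed (which it is, from \cite{BGGV}), and second, the bookkeeping in the suffix-cancellation of (2): one must verify that each cancellation keeps the monomial nonzero so that the nonzero branch of (1) applies, and one must fix the convention for $c_{|c|}\neq d_{|d|}$ when a word is emptied. Notably, the second defining property of a line path (non-periodicity of its tails) is not needed for this lemma; it will only enter later.
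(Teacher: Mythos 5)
Your proof is correct and takes essentially the same approach as the paper's: both establish (1) by commuting $p_{Z_a}$ through $s_d^*$ via $s_d^*p_{Z_a}=p_{r(Z_a,d)}s_d^*$ and using that $Z_a=\{q\}$ makes $r(Z_a,d)$ empty or a singleton (so $p_A$ collapses to an identity or annihilator), and both deduce (2) from the spanning of $\ualgshift$ by monomials $s_cp_As_d^*$ followed by repeated cancellation of common last letters using $s_es_e^*=p_{Z_e}$ and Item (1). Your explicit computation of $r(Z_a,d)$ as $\emptyset$ or $\{\sigma^{|d|}(q)\}$ and your remark on the empty-word convention for $c_{|c|}\neq d_{|d|}$ are slightly more careful than the paper's wording, but the argument is the same.
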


\begin{proof} 
We begin with the proof of Item (1). 

First, observe that if $A,B\in \TCB$, and $c\in \lang$, are such that $B$ is a single-point set and $s_cp_A p_B\neq 0$, then $A\cap B = B$ and $s_c p_A p_{B}=s_c p_{B}$. Indeed, this follows from the fact that $p_A p_{B} = p_{A\cap B}$.

%To prove the first item, let $c\in \lang$ and $A\in \TCB$ be such that $s_cp_A p_{Z_a}\neq 0$. Since $Z_a$ is single-point set, we obtain that $A\cap Z_a=Z_a$ and hence $$s_c p_A p_{Z_a}=s_c p_{Z_a}.$$  

Let $c,d\in \lang$ and $A\in \TCB$ be as in the hypothesis. 
Recall, see  \cite[Lemma~3.3]{reductiontheoremofsubshifts}, that $s_d^*p_{Z_a}=p_{r(Z_a,d)}s_d^*$. Therefore, $$s_c p_A s_d^*p_{Z_a}=s_c p_A p_{r(Z_a, d)}s_d^*.$$ 
Notice that $s_c p_A p_{r(Z_a, d)}\neq 0$ and $r(Z_a,d)$ is a single-point set, since $Z_a$ is a single-point set. Therefore, by the observation above, we have that $A\cap r(Z_a, d)=r(Z_a,d)$ and 
$$s_c p_A p_{r(Z_a, d)}s_d^*=s_c p_{r(Z_a, d)}s_d^*=s_c s_d^* p_{Z_a}.$$ %where the last equality follows from the second item of \cite[Lemma 3.3]{reductiontheoremofsubshifts}. 
Hence, $s_c p_A s_d^* p_{Z_a}=s_c s_d^* p_{Z_a}$ and Item (1) is proved. 

Now we prove Item (2). Recall that $\ualgshift$ is the linear span of elements of the form $s_c p_A s_d^*$, where $c,d\in \lang$ and $A\in \TCB$. Therefore, $\ualgshift p_{Z_a}$ is the linear span of elements of the form $s_c p_A s_d^* p_{Z_a}$. By Item~(1), each  nonzero element $s_c p_A s_d^* p_{Z_a}$ can be written in the form $s_c s_d^* p_{Z_a}$, and hence $\ualgshift p_{Z_a}$ is the linear span of such elements.
Let $c,d\in \lang$ be such that $s_c s_d^* p_{Z_a}\neq 0$ and suppose that $c_{|c|}=d_{|d|}$. Write $c=\alpha e$ and $d=\beta e$, where $e$ is the last letter of $c$ and $d$. Then, 
$$s_c s_d^* p_{Z_a}=s_\alpha s_e s_e^* s_\beta^* p_{Z_a}=s_\alpha p_{Z_e} s_\beta^* p_{Z_a}=s_\alpha s_\beta^* p_{Z_a},$$ where the last equality follows from Item~(1). By applying repeatedly this argument, we get the desired result. 
\end{proof}

We will connect the left ideals defined above with the irreducible modules defined in the previous section. For this, we need the following definition. 

\begin{definition}
Let $\osf$ be a subshift, $q\in \osf$, and let $p\in [q]$. We say that a pair $(\beta,\alpha)\in \lang \times \lang$ is a reduced initial pair associated with $(p,q)$ if there exists $x\in \osf$ such that $p=\beta x$, $q=\alpha x$, and $\alpha_{|\alpha|}\neq \beta_{|\beta|}$. 
\end{definition} 

When $q$ is a line path and $p\in[q]$, the reduced initial pair associated with $(p,q)$ is unique, as we show below.

\begin{lemma} Let $\osf$ be a subshift, and $q\in \osf$ be a line path. Then, for each $p\in [q]$, there exists a unique reduced initial pair $(\beta,\alpha)$ associated with $(p,q)$. 
\end{lemma}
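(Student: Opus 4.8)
The plan is to prove existence by a minimization (longest-common-tail) argument and to prove uniqueness from the non-periodicity that is built into the definition of a line path.

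\textbf{Existence.} Since $p\in[q]$ we have $p\thicksim q$, so $E:=\{(n,m)\in\N\times\N:\sigma^n(p)=\sigma^m(q)\}$ is nonempty; I would choose $(n,m)\in E$ with $n+m$ minimal. Put $x:=\sigma^n(p)=\sigma^m(q)$, let $\beta$ be the length-$n$ prefix of $p$ and $\alpha$ the length-$m$ prefix of $q$, so that $p=\beta x$ and $q=\alpha x$. If $n,m\geq 1$ and the last letters agreed, say $\beta_{|\beta|}=\alpha_{|\alpha|}=e$, then $\sigma^{n-1}(p)=ex=\sigma^{m-1}(q)$ would place $(n-1,m-1)$ in $E$, contradicting minimality; hence $\beta_{|\beta|}\neq\alpha_{|\alpha|}$ and $(\beta,\alpha)$ is a reduced initial pair. (When $n=0$ or $m=0$, i.e.\ one of $p,q$ is a tail of the other, and in particular the case $p=q$ giving $(\eword,\eword)$, the reduced condition holds by the convention on last letters.) Note that this step uses only $p\thicksim q$.

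\textbf{Uniqueness.} Suppose $(\beta,\alpha)$ and $(\beta',\alpha')$ are reduced initial pairs for $(p,q)$, witnessed by tails $x$ and $x'$; write $n=|\beta|$, $n'=|\beta'|$ and assume $n\leq n'$ without loss of generality. Since $\beta$ and $\beta'$ are both prefixes of $p$, we have $\beta'=\beta\gamma$, where $\gamma$ is the block following $\beta$ in $p$, and correspondingly $x=\gamma x'$. Substituting into $q$ gives
\[
\alpha\gamma x'=\alpha x=q=\alpha' x'.
\]
The crucial step is to deduce $\alpha'=\alpha\gamma$ from this equality of infinite words. If $|\alpha'|\neq|\alpha\gamma|$, then cancelling the common initial segment of $\alpha\gamma x'$ and $\alpha' x'$ would exhibit $x'$ as a purely periodic word $\eta^\infty$ for some block $\eta$; but $x'=\sigma^{|\alpha'|}(q)$ is a tail of $q$, so this contradicts the line-path requirement that no tail of $q$ has the form $\eta^\infty$. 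Hence $|\alpha'|=|\alpha\gamma|$ and therefore $\alpha'=\alpha\gamma$.

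Finally, if $\gamma\neq\eword$ then $\beta'=\beta\gamma$ and $\alpha'=\alpha\gamma$ share a last letter (that of $\gamma$), contradicting $\beta'_{|\beta'|}\neq\alpha'_{|\alpha'|}$. Thus $\gamma=\eword$, forcing $n=n'$ and hence $\beta=\beta'$, $x=x'$, and (from $\alpha'=\alpha\gamma=\alpha$) $\alpha=\alpha'$, so the two pairs coincide. I expect the periodicity step to be the main obstacle: it is the single place where the defining non-periodicity of a line path is invoked, and it is exactly what forbids two common tails of different lengths sharing the same infinite end. The other line-path condition, $Z_{a_0}=\{q\}$, plays no role in this lemma.
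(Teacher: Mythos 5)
Your proof is correct, and it differs from the paper's in organization and in scope. The paper proves only the uniqueness half: given two reduced initial pairs it runs a four-case analysis on the strict inequalities between $|\alpha|,|\alpha'|$ and $|\beta|,|\beta'|$, using non-periodicity of the tails of the line path to exclude the mixed cases (e.g.\ $|\alpha'|>|\alpha|$, $|\beta'|<|\beta|$, which yields a tail $(dc)^\infty$) and the distinct-last-letter condition to exclude the cases where both lengths grow; the boundary cases where exactly one length agrees (e.g.\ $|\beta'|=|\beta|$ but $|\alpha'|>|\alpha|$) are not written out, and existence of a reduced pair is taken as evident. You instead make the single normalization $|\beta|\le|\beta'|$, factor $\beta'=\beta\gamma$ so that $x=\gamma x'$, invoke non-periodicity exactly once to force $|\alpha'|=|\alpha\gamma|$ (whence $\alpha'=\alpha\gamma$, both being prefixes of $q$), and let the reduced condition kill $\gamma\neq\eword$; this uniform two-step argument subsumes all of the paper's cases, including the equality cases its proof leaves implicit, and isolates the two ingredients cleanly (periodicity forbids a length mismatch; reducedness forbids a common nonempty suffix). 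Your existence argument --- minimizing $n+m$ over $\{(n,m)\in\N\times\N:\sigma^n(p)=\sigma^m(q)\}$ and cancelling a shared last letter, which drops the sum by $2$ --- is a genuine addition relative to the paper, and as you note it uses only $p\thicksim q$. Your two side remarks also check out against the paper's conventions: pairs with $|\alpha|=0$ or $|\beta|=0$ are indeed treated as reduced (cf.\ Remark~\ref{coelhos}), and the condition $Z_{a_0}=\{q\}$ plays no role in this lemma --- it is the other half of Definition~\ref{linepath}, used only later in the surjectivity argument of Theorem~\ref{eletrico}.
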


\begin{proof} Let $(\beta, \alpha)$ and $(\beta',\alpha')$ be reduced initial pairs associated with $(p,q)$. Then, there exists $x,x'\in \osf$ such that $\beta x=q=\beta' x'$ and $\alpha x=q=\alpha' x'$. To prove that $\alpha=\alpha'$ and $\beta = \beta'$ it is enough to show that $|\alpha|=|\alpha'|$ and $|\beta|=|\beta'|$. We divide the proof into a few cases.

Suppose first that $|\alpha'|>|\alpha|$ and $|\beta'|>|\beta|$. Since $\alpha x= \alpha'x'$, there exists $c\in \lang$ (with $|c|>0$) such that $\alpha'=\alpha c$. Hence, $\alpha c x'=\alpha x$ and therefore $cx'=x$. Similarly, since $|\beta'|>|\beta|$ and $\beta x=\beta ' x'$, there exists $d\in \lang$ (with $|d|>0$) such that $\beta'=\beta d$ and $dx'=x$. Joining the previous observations, we obtain that $cx'=dx'$. Suppose that $|c|\neq |d|$ and, without loss of generality,  suppose that $|c|>|d|$. Then, since $cx'=dx'$, there is an $e\in \lang$ (with $|e|>0$) such that $c=de$. Therefore, $dex'=cx'=dx'$ and hence $ex'=x'$. From this last equality, we get that $x'=e^{\infty}$, which is impossible since $q$ is a line path. So, we must have that $|c|=|d|$. Since $cx'=dx'$, we get that $c=d$. This implies that $\alpha'=\alpha c$ and $\beta'=\beta c$, which contradicts the assumption that $(\alpha',\beta')$ is a reduced initial pair. Therefore, the case $|\alpha'|>|\alpha|$ and $|\beta'|>|\beta|$ can not happen.

The case $|\alpha|>|\alpha'|$ and $|\beta|>|\beta'|$ is also not possible, as one can show analogously to what is done in the paragraph above.

Next, suppose that $|\alpha'|>|\alpha|$ and $|\beta'|<|\beta|$. As before, since $\alpha'x'=\alpha x$, there exists $c\in \lang$ (with $|c|>0$) such that $\alpha'=\alpha c$ and $cx'=x$. Similarly, %since $|\beta'|<|\beta|$ and $\beta'x'=\beta x$ then 
there exists $d\in \lang$ (with $|d|>0$) such that $\beta=\beta' d$ and $dx=x'$. Joining these observations, we obtain that $dcx'=dx$ and hence $dcx'=x'$. From this last equality, we get that $x'=(dc)^\infty$, which contradicts the assumption that $q$ is a line path. Therefore, the case $|\alpha'|>|\alpha|$ and $|\beta'|<|\beta|$ is not possible. 

The case $|\alpha'|<|\alpha|$ and $|\beta'|>|\beta|$ is dealt with analogously to the case in the paragraph above, and is also not possible. 

In conclusion, the unique possible case is $|\alpha'|=|\alpha|$ and $|\beta'|=|\beta|$, and so the lemma is proved.

\end{proof}

%\begin{remark} In the setting of the definition above, since $p\in [q]$, there always exists a unique reduced initial pair $(\beta,\alpha)$ associated with $(p,q)$ (and it is possible that $|\alpha|=0$ or $|\beta|=0$). {\color{red} Aqui esta usando que $q$ é um line path? Para garantir que q nao eh um ciclo? acho que esta unicidade depende de nao ser ciclo...ou teria que definir algum tamanho minimo }
%\end{remark}

Our next goal is to define an isomorphism between $\PP_{[q]}$, where $q=aa_1a_2...$ is a line path, and $\ualgshift p_{Z_a}$. We start by defining the relevant map between the modules. 

\begin{definition}\label{aparelho} Let $\osf$ be a subshift and $q=aa_1a_2...$ a line path. We define an $R$-module homomorphism
$$\psi:\PP_{[q]}\rightarrow \ualgshift p_{Z_a}$$
on the basis of $\PP_{[q]}$ as follows: for an element $p\in [q]$, let $(\beta,\alpha)$ be the (unique) reduced initial pair associated with $(p,q)$ and define $\psi(\delta_p)=s_\beta s_\alpha^* p_{Z_a}$. 
\end{definition}

\begin{remark}\label{coelhos} In the setting of the above definition, notice that:
\begin{itemize} 
\item If $|\alpha|=0$, then $\psi(\delta_p)=s_\beta p_{Z_a}$,
\item If $|\beta|=0$, then $\psi(\delta_p)=s_\alpha^* p_{Z_a}$,
\item If $|\alpha|=0=|\beta|$ (which is the case if and only if $q=p$), then $\psi(\delta_q)=p_{Z_a}$. 
\end{itemize}
\end{remark}

We show next that $\psi$ defined above is an isomorphism.

\begin{theorem}\label{eletrico} Let $R$ be a field, $\osf$ a subshift, $q=aa_1a_2...\in \osf$ a line path, and $L_y:\ualgshift p_{Z_a}\rightarrow \ualgshift p_{Z_a}$ the liner map defined by $L_y(\mu)=y\mu$, for each $\mu \in \ualgshift p_{Z_a}$.
Then, the $R$-module homomorphism $\psi:\PP_{[q]}\rightarrow \ualgshift p_{Z_{a}}$ of Definition~\ref{aparelho} is an isomorphism and moreover, for each $y\in\ualgshift$, the diagram below commutes
\begin{displaymath}
\xymatrix{
\PP_{[q]} \ar[r]^{\psi} \ar[d]_{\pi(y)} &
\ualgshift p_{Z_a} \ar[d]^{L_y} \\
\PP_{[q]} \ar[r]_{\psi} & \ualgshift p_{Z_a} }
\end{displaymath}
where $\pi$ is the homomorphism of Proposition~\ref{representationtheorem}. 
\end{theorem}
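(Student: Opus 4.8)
The plan is to prove two things: that $\psi$ is an isomorphism of $R$-modules, and that it intertwines $\pi(y)$ with $L_y$ for every $y \in \ualgshift$. I would tackle the intertwining property first because, once established for the generators of $\ualgshift$, it does most of the work for surjectivity and will also clarify why $\psi$ respects the module structure.

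\textbf{Intertwining on generators.} Since $\pi$ and $L$ are both homomorphisms (indeed $L_{y_1 y_2} = L_{y_1} L_{y_2}$ and $\pi(y_1 y_2) = \pi(y_1)\pi(y_2)$), it suffices to verify $\psi \circ \pi(y) = L_y \circ \psi$ on the generators $y \in \{s_a, s_a^*, p_A\}$, and then extend multiplicatively. I would fix a basis element $\delta_p \in \PP_{[q]}$ with reduced initial pair $(\beta,\alpha)$, so $\psi(\delta_p) = s_\beta s_\alpha^* p_{Z_a}$, and compute $\psi(\pi(s_b)(\delta_p))$, $\psi(\pi(s_b^*)(\delta_p))$ and $\psi(\pi(p_A)(\delta_p))$ case by case, comparing each with $s_b s_\beta s_\alpha^* p_{Z_a}$, $s_b^* s_\beta s_\alpha^* p_{Z_a}$ and $p_A s_\beta s_\alpha^* p_{Z_a}$ respectively. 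The subtlety is that after applying a generator, the new element $bp$, $\sigma(p)$, etc.\ has its own reduced initial pair, which need not be $(b\beta,\alpha)$ or the naive candidate; one must check that the relations (ii) and (iii) of Definition~\ref{universal properties} together with Lemma~\ref{linearspan}(1) convert the algebra expression into the $s_{\beta'} s_{\alpha'}^* p_{Z_a}$ corresponding to the genuine reduced pair. This bookkeeping is where the cancellation $s_e s_e^* \to p_{Z_e}$ and the absorption of $p_A$ against the single-point set $Z_a$ enter, exactly as in Lemma~\ref{linearspan}.

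\textbf{Bijectivity.} For surjectivity, Lemma~\ref{linearspan}(2) already tells us $\ualgshift p_{Z_a}$ is spanned by the elements $s_c s_d^* p_{Z_a}$ with $c_{|c|} \neq d_{|d|}$; I would argue that each such spanning element equals $\psi(\delta_p)$ for a suitable $p \in [q]$, namely $p = c\xi$ where $q = d\xi$ (such $\xi$ exists precisely when $s_c s_d^* p_{Z_a} \neq 0$, since applying the defining relations forces $d$ to be an initial block of $q$). The condition $c_{|c|} \neq d_{|d|}$ matches the reducedness $\alpha_{|\alpha|} \neq \beta_{|\beta|}$, so $(c,d)$ is the reduced initial pair and $\psi(\delta_p) = s_c s_d^* p_{Z_a}$. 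For injectivity, the cleanest route is to observe that $\PP_{[q]}$ is irreducible (Theorem~\ref{invariant submodules}, using that $R$ is a field) and $\psi$ is a nonzero $\ualgshift$-module map by the intertwining property just proved; hence $\ker \psi$ is a proper $\pi$-invariant submodule, forcing $\ker \psi = 0$. This sidesteps any direct linear-independence computation among the $s_\beta s_\alpha^* p_{Z_a}$.

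\textbf{Main obstacle.} I expect the genuine difficulty to lie in the generator-by-generator intertwining check, specifically in showing that applying $S_b^*$ (which shortens $p$ to $\sigma(p)$) interacts correctly with the reduced initial pair: when the removed letter was the first letter of $\beta$, the new reduced pair is obtained simply by dropping that letter, but when $|\beta| = 0$ one must lengthen $\alpha$ instead, and verifying that $s_b^* s_\alpha^* p_{Z_a}$ equals the correct $s_{\alpha'}^* p_{Z_a}$ requires care with the relation $s_b^* p_{Z_a} = p_{r(Z_a,b)} s_b^*$ and the fact that $q$ is a line path (so no spurious periodicity collapses the expression). Once this case analysis is organized around whether we are extending or retracting $\beta$ versus $\alpha$, the remaining cases for $S_b$ and $P_A$ are comparatively routine, relying on Lemma~\ref{linearspan}(1) to absorb $p_A$.
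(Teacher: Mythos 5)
Your proposal is correct and follows essentially the same route as the paper's proof: intertwining is checked generator by generator (with exactly the case analysis you anticipate for $s_e^*$, using Lemma~\ref{linearspan}(1) and $s_d^* p_{Z_a}=p_{r(Z_a,d)}s_d^*$), surjectivity comes from Lemma~\ref{linearspan}(2) by exhibiting each nonzero $s_c s_d^* p_{Z_a}$ as $\psi(\delta_{c\xi})$ with $q=d\xi$ forced by $Z_a=\{q\}$, and injectivity follows because $\ker\psi$ is a $\pi$-invariant submodule of the irreducible module $\PP_{[q]}$. The only differences are cosmetic: the paper proves surjectivity before the intertwining, and it settles the nonvanishing questions in the generator check via the grading isomorphism $\Phi$ of Theorem~\ref{thm:set-theoretic-partial-action} rather than purely by manipulating the defining relations.
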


\begin{proof}  First we show that $\psi$ is surjective. For this notice that, by the second item of Lemma~\ref{linearspan}, it is enough to show that $s_cs_d^*p_{Z_a}\in Im(\psi)$ for each $c,d\in \lang$ with $c_{|c|}\neq d_{|d|}$ and $s_cs_d^* p_{Z_a}\neq 0$. We divide the proof is a few cases.

Suppose that $|c|=0$ and $|d|>0$. Write $d=d_1...d_m\in \lang$, where $d_i\in \alf$ for each $i$. Since $s_d^*p_{Z_a}=0$ in case $d_1\neq a$, we conclude that $d_1=a$. As $q=aa_1a_2...$ is a line path, we obtain that $q=dz$, with $z\in \osf$. Then, $z\in [q]$ and the reduced initial pair associated with $(z,q)$ is the pair $(w,d)$, where $w$ is the empty word. From Remark~\ref{coelhos}, we get that $\psi(\delta_z)=s_d^*p_{Z_a}$.

Next, suppose that $|d|=0$ and $|c|>0$. Write $c=c_1...c_n$,
where $c_i\in \alf$ for each $i$. Notice that $s_cp_{Z_a}=s_c p_{F_c}p_{Z_a}$, since $s_c=s_cs_c^*s_c=s_c p_{F_c}$. We may assume that $F_c\cap Z_a\neq\emptyset$, since otherwise $s_c p_{F_c}p_{Z_a}=0$. Let $z\in F_c\cap Z_a$. Then $cz\in \osf$, since $z\in F_c$, and $z=q$,  since $z\in Z_a=\{q\}$. Let $y=cz=cq$. Then, $y\in [q]$ and $(c,w)$ is the reduced initial pair associated to $(y,q)$. By Remark~\ref{coelhos}, we conclude that $\psi(\delta_y)=s_c p_{Z_a}$.

Now, suppose that $|c|>0$ and $|d|>0$. Observe that $$s_cs_d^*p_{Z_a}=s_cs_c^*s_cs_d^*s_ds_d^*p_{Z_a}=s_cp_{F_c}p_{F_d}s_d^*p_{Z_a}=s_cp_{F_c}p_{F_d}p_{r(Z_a,d)}s_d^*,$$ where the last equality follows from \cite[Lemma 3.3]{reductiontheoremofsubshifts}.
Since $s_cs_d^*p_{Z_a}\neq 0$, we have that $F_c\cap F_d\cap r(Z_a, d)\neq \emptyset.$ Let $z\in F_c\cap F_d\cap r(Z_a,d)$. Then $cz\in \osf$, $dz\in \osf$, and $dz\in Z_a$. Since $q=aa_1a_2...$ is a line path we have that $Z_a=\{q\}$. So, $dz=q$ and therefore $cz\thicksim dz=q$. The pair $(c,d)$ is the reduced initial pair associated with $(cz,q)$, since $c_{|c|}\neq d_{|d|}$. From the definition of $\psi$, we get that $\psi(\delta_{cz})=s_cs_d^*p_{Z_a}$.

The final case, in which $|d|=0$ and $|c|=0$, follows directly from Remark~\ref{coelhos}. This concludes the proof that $\psi$ is surjective.

We now proceed to show that the diagram in the statement of the proposition commutes, for each $y\in \ualgshift$. Since $S:=\{p_A, s_e, s_e^*:A\in \TCB, e\in \alf\}$ is a generating set, and the maps $L:\ualgshift \rightarrow End_R(\ualgshift p_{Z_a})$, given by $L(y)=L_y$, and $\pi$ are algebra homomorphisms, it is enough to prove the commutativity of the diagram for the elements in S.

We begin with an element of the form $p_A$, for $A\in \TCB$. Let $x\in [q]$ and $(c,d)$ be the reduced initial pair associated with $(x,q)$. Then, $\psi(\delta_x)=s_cs_d^* p_{Z_a}$ and hence $$L_{p_A}(\psi(\delta_x))=p_A s_c s_d^* p_{Z_a}.$$ Let $\Phi$ be the isomorphism of Theorem~\ref{thm:set-theoretic-partial-action}. It follows by direct calculations that $\Phi(p_As_cs_d^* p_{Z_a})\neq 0$ if, and only if, $x\in A$. Therefore, $p_As_cs_d^* p_{Z_a}\neq 0$ if, and only if, $x\in A$. On the other hand, $$\psi(\pi(p_A)\delta_x)=[x\in A]\psi(\delta_x)=[x\in A]s_cs_d^* p_{Z_a}.$$
So, in case $x\notin A$, we get that
$$L_{p_A}(\psi(\delta_x))=0=\psi(\pi(p_A)(\delta_x)).$$ In case $x\in A$, notice that $0\neq p_As_cs_d^*p_{Z_a}=s_cp_{r(A,c)}s_d^* p_{Z_a}$. From the first item of Lemma \ref{linearspan}, we conclude that 
$$s_cp_{r(A,c)}s_d^* p_{Z_a}=s_cs_d^* p_{Z_a}.$$
Therefore, in this case, we have that 
$$L_{p_A}(\psi(\delta_x))=s_cs_d^*p_{Z_a}=\psi(\pi(p_A)(\delta_x)).$$
We have proved that $L_{p_A}(\psi(\delta_x))=\psi(\pi(p_A)(\delta_x))$ for each element $\delta_x$ in the basis of $\PP_{[q]}$, which implies that $L_{p_A}\circ \psi=\psi\circ \pi(p_A)$.

Next, we consider an element of the form $y=s_e$, where $e\in \alf$. Let $x\in [q]$ and $(c,d)$ be the reduced initial pair associated with $(x,q)$. Suppose that  $ex\in \osf$. Then, $(ec,d)$ is the reduced initial pair associated with $(ex,q)$ and therefore 
$$\psi(\pi(s_e)(\delta_x))=\psi(\delta_{ex})=s_{ec}s_d^*p_{Z_a}=s_e s_c s_d^*p_{Z_a}=L_{s_e}(\psi(\delta_x)).$$
Suppose now that $ex\notin \osf$. Then, $s_e s_cs_d^* p_{Z_a}=0$ (since $\Phi(s_e s_cs_d^* p_{Z_a})=0$, where $\Phi$ is the isomorphism of Theorem~\ref{thm:set-theoretic-partial-action}). Moreover, since $ex\notin \osf$, we have that $\pi(s_e)(\delta_x)=0$. Therefore, in this case 
$$L_{s_e}(\psi(\delta_x))=s_es_cs_d^*p_{Z_a}=0=\psi(\pi(s_e)(\delta_x)).$$
We have proved that $L_{s_e}(\psi(\delta_x))=\psi(\pi(\delta_x))$ for each $x\in [q]$. Hence, $L_{s_e}\circ \psi=\psi\circ \pi(s_e)$.

The final type of element we need to consider is an element of the form $y=s_e^*$, where $e\in \alf$. Let $x=x_1x_2...\in [q]$ and $(c,d)$ be the reduced initial pair associated with $(x,q)$. Analogously to the previous cases, we obtain that $\Phi(s_e^* s_c s_d^*p_{Z_a})=0$ if, and only if, $x_1=e$, where $\Phi$ is the isomorphism of Theorem~\ref{thm:set-theoretic-partial-action}. So, $s_e^*s_cs_dp_{Z_a}\neq 0$ if, and only if, $x_1=e$. 
Therefore, in case $x_1\neq e$, we have that $$\psi(\pi(s_e^*)(\delta_x))=0=L_{s_e^*}(\psi(\delta_x)).$$
Suppose that $x_1=e$. If $|c|=0$ then the reduced initial pair associated with $(x_2x_3...,q)$ is the pair $(w, de)$ and hence
$$\psi(\pi(s_e^*)(\delta_x))=\psi(\delta_{x_2x_3...})=s_{de}^*p_{Z_a}=s_e^*s_d^*p_{Z_a}=L_{s_e^*}(\psi(\delta_x)).$$
If $|c|>0$, then the first letter of $c$ is $e$, so that $c=e\widetilde{c}$. Therefore, 
$$L_{s_e^*}(\psi(\delta_x))=s_e^*s_c s_d^* p_{Z_a}=s_e^*s_e s_{\widetilde{c}}s_d^* p_{Z_a}=p_{F_e}s_{\widetilde{c}}s_d^*p_{Z_a}=$$
$$=s_{\widetilde{c}} p_{r(F_e,\widetilde{c})}s_d^*p_{Z_a}=s_{\widetilde{c}}s_d p_{Z_a}=\psi(\pi(s_e^*)(\delta_x)),$$ where the second to last equality follows from the first item of Lemma~\ref{linearspan}. We have proved that $L_{s_e^*}(\psi(\delta_x))=\psi(\pi(s_e^*)(\delta_x))$ for each $x\in [q]$. Therefore, $\psi\circ \pi(s_e^*)=L_{s_e^*}\circ \psi$.

To finish the proof we have to show that $\psi$ is injective. For this, we show that $\text{Ker}(\psi)$ is invariant under $\pi$ and use Theorem~\ref{invariant submodules}. Let $y\in \ualgshift$ and $z\in \text{Ker}(\psi)$. Then, $$\psi(\pi(y)(z))=\pi(y)(\psi(z))=\pi(y)(0)=0.$$
So, $\pi(y)(z)\in \text{Ker}(\psi)$ for every $y\in \ualgshift$, that is, $\text{Ker}(\psi)$ is invariant under $\pi$. By Theorem~\ref{invariant submodules}, since $R$ is a field, we conclude that $\text{Ker}(\psi)=0$, as desired.

\end{proof}

The above result allows us to show that the left ideals $\ualgshift p_{Z_{a}}$, associated with line paths, are minimal, see below.

\begin{corollary}\label{madruga}  Let $R$ be a field, $\osf$ a subshift, and $q=aa_1a_2...\in \osf$ a line path. Then, $\ualgshift p_{Z_a}$ is a minimal left ideal of $\ualgshift$.
\end{corollary}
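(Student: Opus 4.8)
The plan is to deduce minimality of the left ideal $\ualgshift p_{Z_a}$ directly from the two main structural results already established, namely Theorem~\ref{eletrico} and Theorem~\ref{invariant submodules}. The key observation is that Theorem~\ref{eletrico} provides an $R$-module isomorphism $\psi:\PP_{[q]}\rightarrow \ualgshift p_{Z_a}$ that intertwines the $\ualgshift$-action on $\PP_{[q]}$ (via $\pi$) with the left-multiplication action $L$ on $\ualgshift p_{Z_a}$, i.e. $L_y\circ\psi=\psi\circ\pi(y)$ for every $y\in\ualgshift$. This means $\psi$ is not merely an $R$-module isomorphism but an isomorphism of left $\ualgshift$-modules, once $\ualgshift p_{Z_a}$ is regarded as a left $\ualgshift$-module via left multiplication. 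Therefore the two modules have the same submodule lattice, and minimality of one transfers to the other.

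First I would make precise the correspondence between left ideals and submodules: a left ideal of $\ualgshift$ contained in $\ualgshift p_{Z_a}$ is exactly the same thing as an $\ualgshift$-submodule of $\ualgshift p_{Z_a}$ under the left-multiplication action, since for a left ideal $I\subseteq\ualgshift p_{Z_a}$ we have $\ualgshift I\subseteq I$, which is precisely $L$-invariance. Thus $\ualgshift p_{Z_a}$ is a minimal left ideal if and only if it is a simple left $\ualgshift$-module, i.e. it is nonzero and has no proper nonzero $L$-invariant $R$-submodule. Next I would transport this question through $\psi$. Since $\psi$ is an $R$-module isomorphism satisfying $L_y\circ\psi=\psi\circ\pi(y)$, it sends $\pi$-invariant submodules of $\PP_{[q]}$ bijectively onto $L$-invariant submodules of $\ualgshift p_{Z_a}$: if $W\subseteq\PP_{[q]}$ satisfies $\pi(\ualgshift)(W)\subseteq W$ then $L_y(\psi(W))=\psi(\pi(y)(W))\subseteq\psi(W)$, and the argument runs identically for the inverse $\psi^{-1}$.

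Finally I would invoke Theorem~\ref{invariant submodules}: since $R$ is a field, $\PP_{[q]}$ is irreducible, so its only $\pi$-invariant submodules are $0$ and $\PP_{[q]}$ itself. Under the bijection induced by $\psi$, the only $L$-invariant submodules of $\ualgshift p_{Z_a}$ are therefore $\psi(0)=0$ and $\psi(\PP_{[q]})=\ualgshift p_{Z_a}$. Moreover $\ualgshift p_{Z_a}$ is nonzero because it contains $p_{Z_a}\neq 0$ (this can be seen from the grading in Theorem~\ref{thm:set-theoretic-partial-action}, or from the fact that $\psi(\delta_q)=p_{Z_a}$ by Remark~\ref{coelhos} together with injectivity of $\psi$). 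Translating back through the identification of $L$-invariant submodules with left ideals contained in $\ualgshift p_{Z_a}$, we conclude that $\ualgshift p_{Z_a}$ is nonzero and its only left subideals are $0$ and itself, which is exactly the definition of a minimal left ideal. I do not anticipate a serious obstacle here, since all the substantive work — surjectivity, injectivity, and the intertwining property of $\psi$, together with irreducibility of $\PP_{[q]}$ — has already been carried out; the only point demanding a modicum of care is the clean verification that $\psi$ genuinely identifies the submodule lattices, which is the step I would write out explicitly.
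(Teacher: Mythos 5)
Your proposal is correct and takes essentially the same route as the paper: the paper likewise transports a left ideal $J\subseteq \ualgshift p_{Z_a}$ through $\psi$, using the intertwining relation $L_y\circ\psi=\psi\circ\pi(y)$ from Theorem~\ref{eletrico} to see that $\psi^{-1}(J)$ is a $\pi$-invariant submodule of $\PP_{[q]}$, and then applies Theorem~\ref{invariant submodules} to conclude $J=0$ or $J=\ualgshift p_{Z_a}$. Your explicit verification that $\ualgshift p_{Z_a}\neq 0$ (via $\psi(\delta_q)=p_{Z_a}$ and injectivity of $\psi$) is a small point the paper leaves implicit, but it does not constitute a different argument.
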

\begin{proof}

In this proof, we use the same maps of Theorem~\ref{eletrico}.

Let $J$ be a left ideal of $\ualgshift$ contained in $\ualgshift p_{Z_a}$. Then, $\psi^{-1}(J)$ is an invariant submodule of $\PP_{[q]}$. Indeed, let $y\in \ualgshift$. By the commutativity of the diagram in Theorem~\ref{eletrico}, we have that \[\pi(y)\left(\psi^{-1}(J)\right) = \psi^{-1}\left(L_y(J)\right) \subseteq \psi^{-1}(J),
\]
where the containment above follows from the fact that $J$ is a left ideal of $\ualgshift$. It is clear that $\psi^{-1}(J)$ is a submodule.
So, by Theorem~\ref{invariant submodules}, we conclude that $\psi^{-1}(J)=\PP_{[q]}$ or $\psi^{-1}(J)=0$. Since $\psi$ is an isomorphism, this implies that $J=\ualgshift p_{Z_a}$ or $J=0$, as desired.
\end{proof}

\begin{remark}
    In \cite{Chen}, minimal left ideals of Leavitt path algebras associated with sinks are also studied. But, in a subshift algebra, there are no sinks, in the sense that the initial data only deals with infinite paths. Therefore, there is no clear substitute for the left ideals associated with sinks studied in \cite{Chen}. In fact, we believe that in the context of subshift algebras, these ideals do not exist, and intend to obtain a complete characterization of all left minimal ideals in a follow-up paper.
\end{remark}

\bibliographystyle{abbrv}
\bibliography{ref}

\begin{thebibliography}{10}

\bibitem{Socle3}
G.~Abrams, K.~M. Rangaswamy, and M.~Siles~Molina.
\newblock The socle series of a {L}eavitt path algebra.
\newblock {\em Israel J. Math.}, 184:413--435, 2011.

\bibitem{Namirred}
P.~N. \'{A}nh and T.~G. Nam.
\newblock Special irreducible representations of {L}eavitt path algebras.
\newblock {\em Adv. Math.}, 377:Paper No. 107483, 31, 2021.

\bibitem{Aranga}
P.~Ara and K.~M. Rangaswamy.
\newblock Finitely presented simple modules over {L}eavitt path algebras.
\newblock {\em J. Algebra}, 417:333--352, 2014.

\bibitem{Socle}
G.~Aranda~Pino, D.~Mart\'{\i}n~Barquero, C.~Mart\'{\i}n~Gonz\'{a}lez, and
  M.~Siles~Molina.
\newblock The socle of a {L}eavitt path algebra.
\newblock {\em J. Pure Appl. Algebra}, 212(3):500--509, 2008.

\bibitem{Socle1}
G.~Aranda~Pino, D.~Mart\'{\i}n~Barquero, C.~Mart\'{\i}n~Gonz\'{a}lez, and
  M.~Siles~Molina.
\newblock Socle theory for {L}eavitt path algebras of arbitrary graphs.
\newblock {\em Rev. Mat. Iberoam.}, 26(2):611--638, 2010.

\bibitem{reductiontheoremofsubshifts}
D.~Bagio, C.~G. Canto, D.~Gonçalves, and D.~Royer.
\newblock The reduction theorem for algebras of one-sided subshifts over
  arbitrary alphabets.
\newblock {\em arXiv:2306.14983 [math.RA]}, 2023.

\bibitem{BGGV}
G.~Boava, G.~G. de~Castro, D.~Gon\c{c}alves, and D.~van Wyk.
\newblock Algebras of one-sided subshifts over arbitrary alphabets.
\newblock {\em ArXiv:2211.02148 [math.RA]}, 2022.

\bibitem{BCGWLabel}
G.~Boava, G.~G. de~Castro, D.~Gon\c{c}alves, and D.~van Wyk.
\newblock Leavitt path algebras of labelled graphs.
\newblock {\em J. Algebra}, 629:265--306, 2023.

\bibitem{Chen}
X.-W. Chen.
\newblock Irreducible representations of {L}eavitt path algebras.
\newblock {\em Forum Math.}, 27(1):549--574, 2015.

\bibitem{FGGirred}
C.~Farsi, E.~Gillaspy, and D.~Gon\c{c}alves.
\newblock Irreducibility and monicity for representations of {$k$}-graph
  {$C^*$}-algebras.
\newblock {\em New York J. Math.}, 29:507--553, 2023.

\bibitem{MR2785948}
D.~Gon\c{c}alves and D.~Royer.
\newblock On the representations of {L}eavitt path algebras.
\newblock {\em J. Algebra}, 333:258--272, 2011.

\bibitem{GRirred}
D.~Gon\c{c}alves and D.~Royer.
\newblock Irreducible and permutative representations of ultragraph {L}eavitt
  path algebras.
\newblock {\em Forum Math.}, 32(2):417--431, 2020.

\bibitem{reduction}
D.~Gonçalves and D.~Royer.
\newblock Representations and the reduction theorem for ultragraph {L}eavitt
  path algebras.
\newblock {\em J. Algebraic Combin.}, 53:505--526, 2021.

\bibitem{rangaR}
R.~Hazrat and K.~M. Rangaswamy.
\newblock On graded irreducible representations of {L}eavitt path algebras.
\newblock {\em J. Algebra}, 450:458--486, 2016.

\bibitem{Lia}
L.~Va\v{s}.
\newblock Graded irreducible representations of {L}eavitt path algebras: a new
  type and complete classification.
\newblock {\em J. Pure Appl. Algebra}, 227(3):Paper No. 107213, 18, 2023.

\end{thebibliography}

\end{document}